\UseRawInputEncoding
\documentclass[leqno,12pt]{amsart}
\usepackage{amsmath,amstext,amssymb,amsopn,amsthm,mathrsfs}
\usepackage{epsfig,graphics,latexsym,graphicx,color}
\issueinfo{}
\copyrightinfo{}
  {}
\pagespan{1}{}
\usepackage{graphicx}
\allowdisplaybreaks
\definecolor{darkgreen}{rgb}{0,.5,0}
\numberwithin{equation}{section}
\allowdisplaybreaks

\newtheorem{theorem}{Theorem}[section]
\newtheorem{lemma}{Lemma}[section]

\newtheorem{definition}{Definition}[section]

\newtheorem*{rem*}{Remark}
\newtheorem{remark}{Remark}[section]

\setlength{\textheight}{20.5cm} \setlength{\textwidth}{16.05cm}
\setlength{\oddsidemargin}{0cm} \setlength{\evensidemargin}{0cm}
\setlength{\topmargin}{0cm}

\begin{document}
\title[]{Boundedness of some operators on grand generalized weighted Morrey spaces on RD-spaces}

\author[]{Suixin He}
\address{College of Mathematics and Statistics, Northwest Normal University, Lanzhou 730070, China}
\email{hesuixinmath@126}

\author[]{Shuangping Tao$^{\ast}$}
\address{College of Mathematics and Statistics, Northwest Normal University, Lanzhou 730070, China}
\email{taosp@nwnu.edu.cn}
\thanks{This research was supported by National Natural Science Foundation of China(Grant No.11561062)}

\subjclass[2010]{Primary 42B20; Secondary 42B35}

\keywords{RD-spaces; grand generalized weighted Morrey spaces; Hardy-Littlewood maximal operator; $\theta$-type Calder\'{o}n-Zygmund operator}

\begin{abstract}
The aim of this paper is to obtain the boundedness of some operator on grand generalized weighted Morrey spaces $\mathcal{L}^{p),\phi}_{\varphi}(\omega)$  over RD-spaces. Under assumption that functions $\varphi$ and $\phi$ satisfy certain conditions, the authors prove that Hardy-Littlewood maximal operator and $\theta$-type Calder\'{o}n-Zygmund operator are bounded on grand generalized weighted Morrey spaces $\mathcal{L}^{p),\phi}_{\varphi}(\omega)$. Moreover, the boundedness of commutator $[b,T_{\theta}]$ which is generated by $\theta$-type Calder\'{o}n-Zygmund operator $T_{\theta}$ and $b\in\mathrm{BMO}(\mu)$ on spaces $\mathcal{L}^{p),\phi}_{\varphi}(\omega)$ is also established. The results regarding the grand generalized weighted Morrey spaces is new even for domains of Euclidean spaces.
\end{abstract}
\maketitle

\section{Introduction}
The study of the spaces of homogeneous type, first introduced by Coifman and Weiss \cite{CW1,CW2}, is a general framework for studying the Calder\'on-Zygmund operators and functions spaces. Around 1970s, Coifman and Weiss started to investigate the some harmonic analysis problems on the metric spaces called space of homogeneous type $(X,d,\mu)$ equipped with a metric $d$ and a regular Borel measure $\mu$ satisfying the doubling condition, i.e., if there exists a positive constant $C_0>1$ such that, for any ball $B(x,r):=\{y\in X:d(x,y)<r\}$ with $x\in X$ and $r>0$ ,
\begin{eqnarray}
\mu(B(x,2r))\leq C_0\mu(B(x,r))
\end{eqnarray}
holds. Since then, many experts have extended some classical results to spaces of homogeneous type in the sense of Coifman and Weiss. However, some harmonic analysis results have so far obtained only on the RD-spaces, which
means that $(X,d,\mu)$ is a space of homogeneous type if there exists positive constants $a, b>1$ such that,
\begin{eqnarray}
b\mu(B(x,r))\leq \mu(B(x,ar));
\end{eqnarray}
holds for all $x\in X$ and $r\in (0,\rm diam(X)/a)$. On the development and research of the operators over RD-spaces, we refer readers see, e.g. \cite{MY,YZ1,YZ2}.

Morrey spaces were introduced in 1938 by Morrey \cite{M} in relation to local regularity problems of solutions of the second order elliptic partial differential equations. In 2009, Komori and Shirai \cite{KS} introduced the weighted Morrey spaces on the Euclidean space. Since 2000, there are many papers focusing on Morrey spaces and weighted Morrey spaces on different setting, see,e.g.,\cite{AX,DR,HS,N,ST,H}.
The generalized weighted Morrey spaces over RD-spaces were introduced in \cite{CL},  where boundedness was established for the
Hardy-Littlewood maximal operator and Calder\'{o}n-Zygmund operator. Very recently, the boundedness of commutators the generalized by the $\theta$-Calder\'{o}n-Zygmund operator and the BMO functions in generalized weighted Morrey spaces over RD-spaces was already treated by Li \textit{et.~al}. \cite{LLW}.

Nowadays the theory of grand Lebesgue space introduced by Iwaniec and Sbordone \cite{IS} is one of the intensively developing directions in Modern analysis. It was realized the usefull in applications to partial differential equations, in geometric function theory, Sobolev spaces theory, see \cite{DSS,FFR,FLS,GIS}. Since then, Some classical operator of harmonic analysis have been intensively studied in recent years. For instance,
Kokilashvili \cite{K} established criteria for the boundedness of several well-known operators in the generalized weighted grand Lebesgue space.
In 2019, Kokilashvili \textit{et.~al} established the weighted extrapolation results in grand Morrey spaces and obtained some applications in PDE \cite{KMR}. Recently, the authors \cite{HT} have obtained the boundedness of some operator on grand generalized Morrey space over non-homogeneous spaces. The more research on the boundedness of operators in grand spaces, we mention e.g. \cite{KM1,KM2,KMR1,MS} and references therein.

Inspired by the above studies, in this paper, we will establish the boundedness Hardy-Littlewood maximal operator and $\theta$-type Calder\'{o}n-Zygmund operators on grand generalized weighted Morrey space over RD-spaces.
For the study of maximal operators and $\theta$-type Calder\'on-Zygmund operators in grand generalized weighted Morrey space defined on RD-spaces, we depend on the results of references \cite{CL,LLW}.

Let $1<p<\infty$ and $\varphi$ be a function on $(0,p-1]$ which is a positive bounded and satisfies $\mathop{\lim}\limits_{x\rightarrow0}\varphi(x)=0$. The class of such functions will be simply denoted by $\Phi_{p}$.  Then the norm of functions $f$ in weighted grand Lebesgue space ${L}^{p)}_{\varphi}(\omega)$ is defined by
\begin{eqnarray}
\|f\|_{{L}^{p)}_{\varphi}(\omega)}=\mathop{\sup}\limits_{0<\varepsilon<p-1}[\varphi(\varepsilon)]^{\frac{1}{p-\varepsilon}}\|f\|_{L^{p-\varepsilon}(\omega)},
\end{eqnarray}
where $L^{r}(\omega)$ is the classical Lebesgue space with respect to a measure $\mu$, and defined by the norm:
$$
\|f\|_{L^{r}(\omega)}:=\bigg(\int_{X}|f(x)|^{r}\omega(x)\mathrm{d}\mu(x)\bigg)^{\frac{1}{r}},\quad 1\leq r<\infty.
$$

On the base of weighted grand Lebesgue space ${L}^{p)}_{\varphi}(\omega)$ , we give the definition of grand generalized weighted Morrey spaces as follows.
\begin{definition}\rm
(Grand generalized weighted Morrey spaces)
Let $1<p<\infty$, let $\omega$ be a weight and $\varphi\in\Phi_{p}$. Suppose that $\phi: (0,\infty)\rightarrow(0,\infty)$ is an increasing function. Then grand generalized weighted Morrey space
$\mathcal{L}^{p),\phi}_{\varphi}(\omega)$ is defined by
$$
\|f\|_{\mathcal{L}^{p),\phi}_{\varphi}(\omega)}:=\bigg\{f\in L^{1}_{\mathrm{loc}}(\omega): \|f\|_{\mathcal{L}^{p),\phi}_{\varphi}(\omega)}<\infty\bigg\},
$$
where
\begin{eqnarray}
&&\|f\|_{\mathcal{L}^{p),\phi}_{\varphi}(\omega)}\\\nonumber
&&\quad:=\mathop{\sup}\limits_{0<\varepsilon<p-1}\varphi(\varepsilon)
\mathop{\sup}\limits_{B\subset X}
[\phi(\omega(B))]^{-\frac{1}{p-\varepsilon}}\bigg(\int_{B}|f(x)|^{p-\varepsilon}\omega(x)\mathrm{d}\mu(x)\bigg)^{\frac{1}{p-\varepsilon}}\\\nonumber
&&\quad\ =\mathop{\sup}\limits_{0<\varepsilon<p-1}\varphi(\varepsilon)\|f\|_{\mathcal{L}^{p-\varepsilon,\phi}(\omega)}.
\end{eqnarray}
\end{definition}

Especially, if we take $\varphi(\varepsilon)=\varepsilon^{\theta}$ with $\theta>0$ in (1.4), then we can denote
$$
\|f\|_{\mathcal{L}^{p),\phi}_{\varphi}(\omega)}:=\|f\|_{\mathcal{L}^{p),\phi}_{\theta}(\omega)}.
$$
\begin{remark}\rm
$\mathrm{(1)}$\ When $\phi(x)=1$, $\mathcal{L}^{p),\phi}_{\varphi}(\omega)=L^{p)}_{\varphi}(\omega)$. Therefore, the grand generalized weighted Morrey space $\mathcal{L}^{p),\phi}_{\varphi}(\omega)$ is an extension of the grand weighted Lebesgue space.

$\mathrm{(2)}$\ If $\omega\in A_p(\mu)$, the generalized weighted Morrey space $\mathcal{L}^{p,\phi}(\omega)$ (see \cite{CL}), which is defined with respect to the norm:
\begin{eqnarray}
\|f\|_{\mathcal{L}^{p,\phi}(\omega)}:=\mathop{\sup}\limits_{B\subset X}\bigg(\frac{1}{\phi(\omega(B))}\int_{B}|f(x)|^{p}\omega(x)\mathrm{d}\mu(x)\bigg)^{\frac{1}{p}},\quad 1\leq p<\infty.
\end{eqnarray}

$\mathrm{(3)}$\ If we take  function $\phi(t)=t^{\frac{p}{q}-1}$ for $t>0$ and $1<p\leq q<\infty$, then grand generalized weighted Morrey space $\mathcal{L}^{p),\phi}_{\varphi}(\omega)$ defined as in $\mathrm{(1.4)}$ is just the grand weighted Morrey space $\mathcal{L}^{p),q}_{\varphi}(\omega)$ which is sightly modified in \cite{KMR}, that is,
\begin{eqnarray}
\|f\|_{\mathcal{L}^{p),q}_{\varphi}(\omega)}=\mathop{\sup}\limits_{0<\varepsilon<p-1}\varphi(\varepsilon)\mathop{\sup}\limits_{B}
[\omega(B)]^{\frac{1}{q}-\frac{1}{p-\varepsilon}}\|f\|_{L^{p-\varepsilon}(\omega)}.
\end{eqnarray}
\end{remark}

\begin{definition}\rm
We say that a weight function $\omega$ belongs to the Muckenhoupt class $A_{p}(\mu)$, $1<p<\infty$, if
$$
\|\omega\|_{A_{p}}:=\mathop{\sup}\limits_{B}\bigg(\frac{1}{\mu(B)}\int_{B}\omega(x)\mathrm{d}\mu(x)\bigg)\bigg(\frac{1}{\mu(B)}\int_{B}[\omega(x)]^{1-p^{\prime}}\mathrm{d}\mu(x)\bigg)^{p-1}<\infty,
$$
where the supremum is taken over all balls $B\subset X$.

Further, $\omega\in A_{1}(\mu)$ if there is a positive constant $C$ such that, for any ball $B\subset X$,
$$\frac{1}{\mu(B)}\int_{B}\omega(x)\mathrm{d}\mu(x)\leq C \mathop{ess\inf}\limits_{y\in B}\omega(y),$$
as in the classical setting, let $A_{\infty}(\mu)=\bigcup^{\infty}_{p=1}A_{p}(\mu)$.
\end{definition}

\hspace{-4mm}{\bf Notation}

$\bullet$  $C$ represents a positive constant which is independent of the main parameters;

$\bullet$  $p^{\prime}$ stands for the conjugate exponent $\frac{1}{p}+\frac{1}{p^{\prime}}=1$;

$\bullet$  $B(x,r)=\{y\in X: d(x,y)<r\}$;

$\bullet$  For any $x,y\in X$ and $\delta\in(0,\infty)$, Let $V(x,y):=\mu(B(x,d(x,y)))$ and $V_{\delta}:=\mu(B(x,\delta))$, it follows from doubling condition that $V(x,y)=V(y,x)$.

Throughout the paper we assume that $\mu(X)<\infty$,

\section{Hardy-Littlewood maximal operator on $\mathcal{L}^{p),\phi}_{\varphi}(\omega)$}

\textbf{\bf 2.1. Weighted boundedness of the maximal operator.} In this subsection we study the one-weighted problem for the Hardy-Littlewood maximal function $M$ defined by setting
\begin{eqnarray}
Mf(x):=\mathop{\sup}\limits_{r>0}\frac{1}{\mu(B(x,r))}\int_{B(x,r)}|f(y)|\mathrm{d}\mu(y),~~ \rm for~ all~x\in X
\end{eqnarray}

\begin{lemma}\rm \cite{ST1} Let $p\in(1,\infty)$ and $\omega\in A_{p}(\mu)$. There exist positive constants $C_1$ and $C_{2}$ such that for any ball $B\subset X$ and each measurable set $E\subseteq B$,
\begin{eqnarray*}
\frac{\omega(E)}{\omega(B)}\leq C_1 \bigg[\frac{\mu(E)}{\mu(B)}\bigg]^{\frac{1}{p}} \quad and  \quad \frac{\omega(E)}{\omega(B)}\geq C_2 \bigg[\frac{\mu(E)}{\mu(B)}\bigg]^{p}.
\end{eqnarray*}
\end{lemma}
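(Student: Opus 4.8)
The plan is to establish the two inequalities by quite different means, since they are of different depth. The lower bound (the inequality with exponent $p$) is an elementary consequence of the definition of $A_p(\mu)$ through a single application of H\"older's inequality, whereas the upper bound (the inequality with exponent $1/p$) is a manifestation of the self-improving, or reverse H\"older, property of Muckenhoupt weights, and this is where the substance lies.

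For the lower estimate I would fix a ball $B$ and a measurable set $E\subseteq B$ and start from $\mu(E)=\int_E\omega^{1/p}\,\omega^{-1/p}\,\mathrm{d}\mu$. H\"older's inequality with exponents $p$ and $p'$, together with $E\subseteq B$ and positivity of the integrand, gives
\begin{eqnarray*}
\mu(E)\leq\bigg(\int_E\omega\,\mathrm{d}\mu\bigg)^{\frac1p}\bigg(\int_E\omega^{1-p'}\,\mathrm{d}\mu\bigg)^{\frac1{p'}}\leq\omega(E)^{\frac1p}\bigg(\int_B\omega^{1-p'}\,\mathrm{d}\mu\bigg)^{\frac1{p'}}.
\end{eqnarray*}
The $A_p(\mu)$ condition for $B$ bounds $\int_B\omega^{1-p'}\,\mathrm{d}\mu$ by $\|\omega\|_{A_p}^{1/(p-1)}\mu(B)^{p'}\omega(B)^{-p'/p}$; inserting this and simplifying the exponents (using $(p-1)p'=p$) yields $\frac{\mu(E)}{\mu(B)}\leq\|\omega\|_{A_p}^{1/p}\big(\frac{\omega(E)}{\omega(B)}\big)^{1/p}$, and raising this to the $p$-th power produces the second inequality with $C_2=\|\omega\|_{A_p}^{-1}$.

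For the upper estimate I would invoke the reverse H\"older inequality for $A_p(\mu)$ weights on the RD-space: there exist constants $\delta>0$ and $C>0$, depending only on $\omega$ and the doubling constant, such that $\big(\frac{1}{\mu(B)}\int_B\omega^{1+\delta}\,\mathrm{d}\mu\big)^{1/(1+\delta)}\leq\frac{C}{\mu(B)}\int_B\omega\,\mathrm{d}\mu$ for every ball $B$. Applying H\"older on $E\subseteq B$ with exponents $1+\delta$ and $(1+\delta)/\delta$ gives $\omega(E)\leq\big(\int_B\omega^{1+\delta}\,\mathrm{d}\mu\big)^{1/(1+\delta)}\mu(E)^{\delta/(1+\delta)}$, and feeding in the reverse H\"older bound turns this into $\frac{\omega(E)}{\omega(B)}\leq C\big(\frac{\mu(E)}{\mu(B)}\big)^{\delta/(1+\delta)}$.

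The main obstacle is the reverse H\"older inequality itself: in contrast to the lower bound it does not follow from the $A_p$ definition by a single H\"older step but encodes the self-improvement of the weight, and its proof rests on the boundedness of the Hardy--Littlewood maximal operator and a Calder\'{o}n--Zygmund decomposition adapted to the homogeneous structure of $X$ --- this is the part one would quote from \cite{ST1}. I would also note that what this route naturally delivers is a bound by \emph{some} positive power $\delta/(1+\delta)$ of $\mu(E)/\mu(B)$; it is this two-sided power comparability of $\omega$ and $\mu$ on balls, rather than the precise value of the exponent (recorded schematically as $1/p$ in the statement), that is actually used in the subsequent Morrey space estimates.
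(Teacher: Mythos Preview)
The paper does not supply a proof of this lemma at all; it is simply quoted from Str\"omberg--Torchinsky \cite{ST1}, so there is no in-paper argument to compare against. Your sketch is exactly the standard proof one finds in that reference: the lower bound with exponent $p$ is the direct H\"older consequence of the $A_p$ condition (your computation giving $C_2=\|\omega\|_{A_p}^{-1}$ is correct), and the upper bound is obtained from the reverse H\"older inequality for $A_p$ weights, which in the homogeneous-space setting is precisely the deep input from \cite{ST1}.

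Your closing remark is also on point and worth keeping: the reverse H\"older route yields $\frac{\omega(E)}{\omega(B)}\le C\big(\frac{\mu(E)}{\mu(B)}\big)^{\delta/(1+\delta)}$ for \emph{some} $\delta>0$ depending on $\omega$, not literally the exponent $1/p$; the exponent $1/p$ in the stated lemma should be read as a placeholder for such a positive power, and only the qualitative two-sided power comparability is used downstream (e.g.\ in deriving Lemma~2.2 and the Morrey estimates).
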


\begin{lemma}\rm \cite{CL} Let $(X,d,\mu)$ be an RD-space, if $\omega\in A_{p}(\mu)$, $p\in(1,\infty)$, then there exist positive constants $C_3, C_{4}>1$ such that for any ball $B\subset X$,
\begin{eqnarray}
\omega(2B)\geq C_3  \omega(B),
\end{eqnarray}
\begin{eqnarray}
\omega(2B)\leq C_4  \omega(B).
\end{eqnarray}
\end{lemma}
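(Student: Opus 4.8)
The plan is to derive both estimates from the subset comparison inequalities of Lemma 2.1 combined with the reverse doubling of $\mu$ that the RD-space hypothesis (1.2) provides. Note first that the inclusion $B\subset 2B$ already yields $\omega(2B)\geq\omega(B)$; the content of the lower bound is to replace the trivial constant $1$ by a constant $C_3$ strictly larger than $1$, and this is precisely where the RD-structure is indispensable, since it guarantees that $\mu$ genuinely grows across annuli. Accordingly, I would first record that (1.2) produces a reverse doubling estimate at scale $2$, namely $\mu(2B)\geq\kappa\,\mu(B)$ for some $\kappa>1$ independent of $B$, whence the annulus $2B\setminus B$ satisfies $\mu(2B\setminus B)=\mu(2B)-\mu(B)\geq(\kappa-1)\mu(B)>0$.

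For the lower bound I apply the second inequality of Lemma 2.1 to the ambient ball $2B$ and the measurable subset $E=2B\setminus B$, which gives
\[
\frac{\omega(2B\setminus B)}{\omega(2B)}\geq C_2\left[\frac{\mu(2B\setminus B)}{\mu(2B)}\right]^{p}.
\]
Inserting $\mu(2B\setminus B)\geq(\kappa-1)\mu(B)$ together with the doubling bound $\mu(2B)\leq C_0\mu(B)$ from (1.1) converts the right-hand side into the $B$-independent constant $c_0:=C_2\bigl[(\kappa-1)/C_0\bigr]^{p}>0$; this $c_0$ must satisfy $c_0<1$, for otherwise the displayed estimate would force $\omega(2B\setminus B)\geq\omega(2B)$, which is impossible since $\omega(B)>0$. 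Writing $\omega(B)=\omega(2B)-\omega(2B\setminus B)\leq(1-c_0)\omega(2B)$ then yields $\omega(2B)\geq(1-c_0)^{-1}\omega(B)$, so one may take $C_3:=(1-c_0)^{-1}>1$.

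For the upper bound I use the same second inequality of Lemma 2.1, now with ambient ball $2B$ and subset $E=B$, to obtain
\[
\frac{\omega(B)}{\omega(2B)}\geq C_2\left[\frac{\mu(B)}{\mu(2B)}\right]^{p}\geq C_2C_0^{-p},
\]
where the last step again uses $\mu(2B)\leq C_0\mu(B)$. Rearranging gives $\omega(2B)\leq C_2^{-1}C_0^{p}\,\omega(B)$, so the choice $C_4:=C_2^{-1}C_0^{p}$ works; it is automatically at least $1$ by the trivial lower bound and may be enlarged to exceed $1$ if desired. Thus only the $\geq$ comparison of Lemma 2.1 is needed for the whole argument.

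I expect the sole genuine difficulty to lie in the very first step: extracting the scale-$2$ reverse doubling $\mu(2B)\geq\kappa\,\mu(B)$ from hypothesis (1.2), in which the dilation factor $a$ need not equal $2$. When $1<a\leq 2$ this is immediate, since $B(x,ar)\subseteq B(x,2r)$ forces $\mu(2B)\geq\mu(B(x,ar))\geq b\,\mu(B)$; for $a>2$ one invokes the standard fact that the reverse doubling property of an RD-space propagates to every dilation factor greater than $1$. Everything after securing this reverse doubling is the routine bookkeeping sketched above.
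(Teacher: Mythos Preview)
Your argument is correct, but there is nothing in the paper to compare it against: Lemma~2.2 is quoted verbatim from \cite{CL} and is stated without proof here. Your derivation via Lemma~2.1 combined with doubling and reverse doubling is the standard route one expects in that reference as well; in particular you correctly isolate that the upper bound (2.3) needs only the doubling of $\mu$ and the $A_p$ comparison $\omega(E)/\omega(B)\geq C_2[\mu(E)/\mu(B)]^{p}$, whereas the strict lower bound (2.2) with $C_3>1$ genuinely requires the reverse-doubling feature of the RD-space to make the annulus $2B\setminus B$ carry a fixed positive fraction of $\mu(2B)$. The one step you flag --- passing from the hypothesis (1.2) at scale $a$ to reverse doubling at scale $2$ when $a>2$ --- is indeed a standard consequence of the RD-space axioms (it follows, for instance, from the power-type two-sided volume bounds $c(R/r)^{\kappa}\leq \mu(B(x,R))/\mu(B(x,r))\leq C(R/r)^{n}$ that characterize RD-spaces), so invoking it is legitimate.
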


\begin{lemma}\rm \cite{CL}
Let $1<p<\infty$, $\omega\in A_{p}(\mu)$, $\varphi\in\Phi_{p}$ and $\phi: (0,\infty)\rightarrow(0,\infty)$ be an increasing function. Assume that
the mapping $t\mapsto\frac{\phi(t)}{t}$ is almost decreasing.
Then $M$ be as in (2.1) is bounded on $\mathcal{L}^{p,\phi}(\omega)$.
\end{lemma}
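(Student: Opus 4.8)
The plan is to fix an arbitrary ball $B=B(x_0,r)\subset X$ and to bound the quantity $\big(\frac{1}{\phi(\omega(B))}\int_B|Mf(x)|^p\omega(x)\,\mathrm{d}\mu(x)\big)^{1/p}$ by $C\|f\|_{\mathcal{L}^{p,\phi}(\omega)}$ uniformly in $B$, so that passing to the supremum over $B$ yields the assertion. The first step is the standard Morrey-type splitting $f=f_1+f_2$ with $f_1:=f\chi_{2B}$ and $f_2:=f\chi_{X\setminus 2B}$; by the sublinearity of $M$ one has $Mf\le Mf_1+Mf_2$, so it suffices to treat the two pieces separately.

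For the local piece $f_1$, I would invoke the classical weighted $L^p$-boundedness of $M$ for $A_p(\mu)$ weights on a space of homogeneous type, which gives $\int_B|Mf_1|^p\omega\,\mathrm{d}\mu\le\int_X|Mf_1|^p\omega\,\mathrm{d}\mu\le C\int_{2B}|f|^p\omega\,\mathrm{d}\mu\le C\,\phi(\omega(2B))\,\|f\|_{\mathcal{L}^{p,\phi}(\omega)}^p$, the last step being the definition of the Morrey norm applied to the ball $2B$. The point is then to replace $\phi(\omega(2B))$ by $\phi(\omega(B))$: by Lemma 2.2 one has $\omega(2B)\le C_4\,\omega(B)$, and since $\phi$ is increasing while $t\mapsto\phi(t)/t$ is almost decreasing, one deduces $\phi(\omega(2B))\le\phi(C_4\omega(B))\le C\,\phi(\omega(B))$, i.e. a doubling property for $\phi\circ\omega$. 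Dividing by $\phi(\omega(B))$ then closes the local estimate.

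For the tail piece $f_2$, I would establish a pointwise bound for $x\in B$. Any ball $B'=B(x,s)$ meeting $\operatorname{supp}f_2=X\setminus 2B$ must satisfy $s\gtrsim r$, so that $B'$ is comparable to, or contains, $B$, and in particular $\omega(B')\ge c\,\omega(B)$. On each such $B'$, Hölder's inequality with exponents $p,p'$ gives $\frac{1}{\mu(B')}\int_{B'}|f|\,\mathrm{d}\mu\le\frac{1}{\mu(B')}\big(\int_{B'}|f|^p\omega\,\mathrm{d}\mu\big)^{1/p}\big(\int_{B'}\omega^{1-p'}\,\mathrm{d}\mu\big)^{1/p'}$; controlling the second factor by the $A_p(\mu)$ condition (which yields $\big(\int_{B'}\omega^{1-p'}\big)^{1/p'}\le C\,\mu(B')\,\omega(B')^{-1/p}$) and the first by the Morrey norm produces $\frac{1}{\mu(B')}\int_{B'}|f|\,\mathrm{d}\mu\le C\|f\|_{\mathcal{L}^{p,\phi}(\omega)}\big(\phi(\omega(B'))/\omega(B')\big)^{1/p}$. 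Using once more that $t\mapsto\phi(t)/t$ is almost decreasing together with $\omega(B')\ge c\,\omega(B)$, the factor $\phi(\omega(B'))/\omega(B')$ is dominated by $C\,\phi(\omega(B))/\omega(B)$ uniformly in $B'$; taking the supremum over the admissible $B'$ gives $Mf_2(x)\le C\|f\|_{\mathcal{L}^{p,\phi}(\omega)}\big(\phi(\omega(B))/\omega(B)\big)^{1/p}$ for every $x\in B$.

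Integrating this pointwise bound over $B$ against $\omega$ yields $\int_B|Mf_2|^p\omega\,\mathrm{d}\mu\le C\|f\|_{\mathcal{L}^{p,\phi}(\omega)}^p\,\frac{\phi(\omega(B))}{\omega(B)}\,\omega(B)=C\|f\|_{\mathcal{L}^{p,\phi}(\omega)}^p\,\phi(\omega(B))$, so after dividing by $\phi(\omega(B))$ the tail also contributes $C\|f\|_{\mathcal{L}^{p,\phi}(\omega)}$. Combining the local and tail estimates and taking the supremum over all balls completes the argument. I expect the main obstacle to be the tail part: one must extract the uniform-in-scale bound $\phi(\omega(B'))/\omega(B')\le C\,\phi(\omega(B))/\omega(B)$, and it is precisely here that the almost-decreasing hypothesis on $\phi(t)/t$ is indispensable, since without it the averages over the large balls $B'$ could not be controlled by a single factor of $\phi(\omega(B))$.
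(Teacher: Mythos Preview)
The paper does not give its own proof of this lemma: it is quoted verbatim from \cite{CL} and used as a black box. So there is no proof in the paper to compare against. That said, your argument is correct and self-contained, and it follows exactly the template the paper does write out for the analogous operator $T_\theta$ in Lemma~3.1: split $f=f\chi_{2B}+f\chi_{X\setminus 2B}$, handle the local part by the weighted $L^p$ bound plus the doubling $\phi(\omega(2B))\le C\phi(\omega(B))$ (which is precisely how the paper bounds $G_1$ via (2.3)--(2.4)), and handle the tail by a pointwise estimate that reduces to $(\phi(\omega(B))/\omega(B))^{1/p}\|f\|_{\mathcal{L}^{p,\phi}(\omega)}$.

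The one noteworthy difference is in the tail. The paper's proof of Lemma~3.1 decomposes $X\setminus 2B$ into dyadic annuli and then invokes Lemma~2.4, which in turn needs the extra integral hypothesis (2.5) on $\phi$. Your approach is more economical: since $M$ is a supremum of averages, any ball $B'=B(x,s)$ contributing to $Mf_2(x)$ must have $s>r$, hence $\omega(B')\ge c\,\omega(B)$ by the weight doubling (2.3); the almost-decreasing property of $\phi(t)/t$ alone then gives $\phi(\omega(B'))/\omega(B')\le C\,\phi(\omega(B))/\omega(B)$, with no series to sum. This is the right way to do it here, because Lemma~2.3 does \emph{not} assume (2.5), so an annular argument relying on Lemma~2.4 would actually overshoot the stated hypotheses. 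Your route is both correct and sharper in that respect.
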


\begin{theorem}\rm \ \
Let $1<p<\infty$, $\omega\in A_{p}(\mu)$, $\varphi\in\Phi_{p}$ and $\phi: (0,\infty)\rightarrow(0,\infty)$ be an increasing function. Let $M$ be as in (2.1). Assume that
the mapping $t\mapsto\frac{\phi(t)}{t}$ is almost decreasing, namely, there exists a positive constant $C$ such that
\begin{eqnarray}
\frac{\phi(t)}{t}\leq C\frac{\phi(s)}{s},
\end{eqnarray}
for $s\geq t$.
Then there exists a positive constant $C$ such that for any $f\in \mathcal{L}^{p),\phi}_{\varphi}(\omega)$,
\begin{eqnarray*}
\|M(f)\|_{\mathcal{L}^{p),\phi}_{\varphi}(\omega)}\leq C\|f\|_{\mathcal{L}^{p),\phi}_{\varphi}(\omega)}
\end{eqnarray*}
\end{theorem}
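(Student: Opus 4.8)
The plan is to reduce the grand estimate to the non-grand one supplied by Lemma 2.3, exploiting that the grand norm is a supremum over $\varepsilon$ of ordinary generalized weighted Morrey norms. Unwinding the definition, one has $\|Mf\|_{\mathcal{L}^{p),\phi}_{\varphi}(\omega)}=\sup_{0<\varepsilon<p-1}\varphi(\varepsilon)\,\|Mf\|_{\mathcal{L}^{p-\varepsilon,\phi}(\omega)}$, so it suffices to bound $\|Mf\|_{\mathcal{L}^{p-\varepsilon,\phi}(\omega)}$ by $\|f\|_{\mathcal{L}^{p-\varepsilon,\phi}(\omega)}$ with a constant independent of $\varepsilon$, multiply by $\varphi(\varepsilon)$, and pass to the supremum. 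For a fixed $\varepsilon$ I would like to invoke Lemma 2.3 with $p$ replaced by $p-\varepsilon$; the hypothesis that $t\mapsto\phi(t)/t$ is almost decreasing is independent of $\varepsilon$ and so transfers verbatim, and the only genuinely new requirement is $\omega\in A_{p-\varepsilon}(\mu)$.

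The crux is therefore the membership $\omega\in A_{p-\varepsilon}(\mu)$, which does not follow from $\omega\in A_{p}(\mu)$ alone since $A_{p-\varepsilon}(\mu)\subsetneq A_{p}(\mu)$. Here I would use the self-improving (openness) property of Muckenhoupt weights on RD-spaces: because $\omega\in A_{p}(\mu)$ there exists $\varepsilon_{0}\in(0,p-1)$ with $\omega\in A_{p-\varepsilon_{0}}(\mu)$. By the nesting $A_{p-\varepsilon_{0}}(\mu)\subset A_{p-\varepsilon}(\mu)$ for $0<\varepsilon\le\varepsilon_{0}$, together with the monotonicity $\|\omega\|_{A_{p-\varepsilon}}\le\|\omega\|_{A_{p-\varepsilon_{0}}}$, the weight lies in every $A_{p-\varepsilon}(\mu)$ with $0<\varepsilon\le\varepsilon_{0}$ and with a uniformly bounded characteristic. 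Since moreover $p-\varepsilon$ stays in the compact interval $[p-\varepsilon_{0},p)$, the constant produced by Lemma 2.3 is uniform on this range, giving $\varphi(\varepsilon)\|Mf\|_{\mathcal{L}^{p-\varepsilon,\phi}(\omega)}\le C\varphi(\varepsilon)\|f\|_{\mathcal{L}^{p-\varepsilon,\phi}(\omega)}\le C\|f\|_{\mathcal{L}^{p),\phi}_{\varphi}(\omega)}$ for $0<\varepsilon\le\varepsilon_{0}$.

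The main obstacle is the complementary range $\varepsilon_{0}<\varepsilon<p-1$, where $p-\varepsilon$ may fall below the self-improvement threshold and $\omega$ need no longer belong to $A_{p-\varepsilon}(\mu)$, so Lemma 2.3 is not directly available. This is where I would exploit the standing assumption $\mu(X)<\infty$, which forces $\omega(X)<\infty$. On a space of finite $\omega$-measure, Jensen's inequality applied to the probability measure $\omega\,d\mu/\omega(B)$ on each ball gives the embedding $\mathcal{L}^{p-\varepsilon_{0},\phi}(\omega)\hookrightarrow\mathcal{L}^{p-\varepsilon,\phi}(\omega)$ for $\varepsilon>\varepsilon_{0}$: passing between the two exponents produces a factor $(\omega(B)/\phi(\omega(B)))^{1/(p-\varepsilon)-1/(p-\varepsilon_{0})}$, and since $t\mapsto t/\phi(t)$ is almost increasing (being the reciprocal of the almost-decreasing $\phi(t)/t$) and $\omega(B)\le\omega(X)<\infty$, this factor is bounded by a constant depending only on $\omega(X)$, $\phi$ and $\varepsilon_{0}$. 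Hence for $\varepsilon>\varepsilon_{0}$ I can estimate $\|Mf\|_{\mathcal{L}^{p-\varepsilon,\phi}(\omega)}\lesssim\|Mf\|_{\mathcal{L}^{p-\varepsilon_{0},\phi}(\omega)}\lesssim\|f\|_{\mathcal{L}^{p-\varepsilon_{0},\phi}(\omega)}$, now using Lemma 2.3 at the admissible exponent $p-\varepsilon_{0}$; bounding $\|f\|_{\mathcal{L}^{p-\varepsilon_{0},\phi}(\omega)}\le\varphi(\varepsilon_{0})^{-1}\|f\|_{\mathcal{L}^{p),\phi}_{\varphi}(\omega)}$ and using that $\varphi$ is bounded controls $\varphi(\varepsilon)\|Mf\|_{\mathcal{L}^{p-\varepsilon,\phi}(\omega)}$ on this range as well.

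Combining the two ranges and taking the supremum over $0<\varepsilon<p-1$ then yields $\|Mf\|_{\mathcal{L}^{p),\phi}_{\varphi}(\omega)}\le C\|f\|_{\mathcal{L}^{p),\phi}_{\varphi}(\omega)}$, which is the assertion. I expect the delicate point to be precisely the uniformity in $\varepsilon$: ensuring that the self-improvement exponent $\varepsilon_{0}$ and the associated $A_{p-\varepsilon}$ characteristics yield a single constant on $(0,\varepsilon_{0}]$, and that the finite-measure embedding absorbs the large-$\varepsilon$ regime without reintroducing an $\varepsilon$-dependent blow-up.
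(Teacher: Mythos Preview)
Your proof is correct and follows essentially the same two-range strategy as the paper: split the supremum at a threshold (the paper's $\delta$, your $\varepsilon_{0}$), apply Lemma~2.3 directly for small $\varepsilon$, and for large $\varepsilon$ use H\"older's inequality on each ball (your Jensen embedding) together with $\mu(X)<\infty$ and the almost-decreasing property of $\phi(t)/t$ to reduce to the fixed exponent $p-\varepsilon_{0}$. You are in fact more explicit than the paper about where the threshold comes from---the openness of $A_{p}(\mu)$---which is precisely what is needed to justify invoking Lemma~2.3 at exponents $p-\varepsilon<p$.
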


\begin{proof}[\textbf{Proof}]
Choosing a number $\delta$ such that $0<\varepsilon\leq \delta<p-1$, observe that
\begin{eqnarray*}
\|M(f)\|_{\mathcal{L}^{p),\phi}_{\varphi}(\omega)}&&=\mathop{\sup}\limits_{0<\varepsilon<p-1}\varphi(\varepsilon)\mathop{\sup}\limits_{B\subset X}
[\phi(\omega(B))]^{-\frac{1}{p-\varepsilon}}\|M(f)\|_{L^{p-\varepsilon}(\omega)}\\
&&\quad\leq\mathop{\sup}\limits_{0<\varepsilon\leq \delta}\varphi(\varepsilon)\mathop{\sup}\limits_{B}
[\phi(\omega(B))]^{-\frac{1}{p-\varepsilon}}\|M(f)\|_{L^{p-\varepsilon}(\omega)}\\
&&\quad\quad+\mathop{\sup}\limits_{\delta<\varepsilon<p-1}\varphi(\varepsilon)\mathop{\sup}\limits_{B}
[\phi(\omega(B))]^{-\frac{1}{p-\varepsilon}}\|M(f)\|_{L^{p-\varepsilon}(\omega)}\\
&&\quad=:\mathrm{E}_{1}+\mathrm{E}_{2}.
\end{eqnarray*}

The estimates for $\mathrm{E}_{1}$ goes as follows. By applying the $\mathcal{L}^{p,\phi}(\omega)$-boundedness of $\mathrm{M}$ (see\cite{CL}) and (1.4), we can deduce that
\begin{eqnarray*}
&&\mathop{\sup}\limits_{0<\varepsilon\leq \delta}\varphi(\varepsilon)\mathop{\sup}\limits_{B}[\phi(\omega(B))]^{-\frac{1}{p-\varepsilon}}\|M(f)\|_{L^{p-\varepsilon}(\omega)}\\
&&\quad=\mathop{\sup}\limits_{0<\varepsilon\leq \delta}\varphi(\varepsilon)\|M(f)\|_{\mathcal{L}^{p,\phi}(\omega)}\\
&&\quad\leq C\|f\|_{\mathcal{L}^{p),\phi}_{\varphi}(\omega)}.
\end{eqnarray*}

Now let us estimate $\mathrm{E}_{2}$. Since $\delta<\varepsilon<p-1$, then we have $\frac{p-\delta}{p-\varepsilon}>1$. Further, by virtue of H\"{o}lder's inequality and $\mathcal{L}^{p,\phi}(\omega)$-boundedness of $M$, we get
\begin{eqnarray*}
\mathrm{E}_{2}&=&\mathop{\sup}\limits_{\delta<\varepsilon<p-1}\varphi(\varepsilon)\mathop{\sup}\limits_{B}
[\phi(\omega(B))]^{-\frac{1}{p-\varepsilon}}\|M(f)\|_{L^{p-\varepsilon}(\omega)}\\
&\leq&\mathop{\sup}\limits_{\delta<\varepsilon<p-1}\varphi(\varepsilon)\mathop{\sup}\limits_{B}
[\phi(\omega(B))]^{-\frac{1}{p-\varepsilon}}\|M(f)\|_{L^{p-\delta}(\omega)}(\omega(B))^{\frac{\varepsilon-\delta}{(p-\varepsilon)(p-\delta)}}\\
&=&\mathop{\sup}\limits_{\delta<\varepsilon<p-1}\varphi(\varepsilon)[\varphi(\delta)]^{-1}
\varphi(\delta)\mathop{\sup}\limits_{B}[\phi(\omega(B))]^{-\frac{1}{p-\varepsilon}}[\phi(\omega(B))]^{\frac{1}{p-\delta}}\\
&&\quad\quad\times[\phi(\omega(B))]^{-\frac{1}{p-\delta}}\|M(f)\|_{L^{p-\delta}(\omega)}(\omega(B))^{\frac{\varepsilon-\delta}{(p-\varepsilon)(p-\delta)}}\\
&=&\mathop{\sup}\limits_{\delta<\varepsilon<p-1}\varphi(\varepsilon)[\varphi(\delta)]^{-1}
\varphi(\delta)\mathop{\sup}\limits_{B}[\phi(\omega(B))]^{\frac{1}{p-\delta}-\frac{1}{p-\varepsilon}}[\omega(B)]^{\frac{\varepsilon-\delta}{(p-\varepsilon)(p-\delta)}}\\
&&\quad\quad\times[\phi(\omega(B))]^{-\frac{1}{p-\delta}}\|M(f)\|_{L^{p-\delta}(\omega)}\\
&\leq&C\|f\|_{\mathcal{L}^{p),\phi}_{\varphi}(\omega)}.
\end{eqnarray*}
Which, together with the estimate for $\mathrm{E}_{1}$, the {\bf Theorem 2.1} is proved.
\end{proof}

With an argument similar to that used in the proof of {\bf Theorem 2.1}, it is easy to obtain the following result on the maximal operator $\widetilde{M}_{r}$.\\
\textbf{Corollary 2.2.}\ \
Let $1<p<\infty$, $\omega\in A_{p}(\mu)$, $\varphi\in\Phi_{p}$ and $\phi: (0,\infty)\rightarrow(0,\infty)$ be an increasing function. Assume that
the mapping $t\mapsto\frac{\phi(t)}{t}$ is almost decreasing function satisfying $\mathrm{(2.4)}$. Then non-centered maximal operator $\widetilde{M}_{r}$ is bounded on $\mathcal{L}^{p),\phi}_{\varphi}(\omega)$, where $\widetilde{M}_{r}$ is defined by
$$
\widetilde{M}_{r}(f)(x):=\mathop{\sup}\limits_{x\in B}\bigg(\frac{1}{\mu(B)}\int_{B}|f(y)|^{r}\mathrm{d}\mu(y)\bigg)^{\frac{1}{r}}.
$$
\textbf{\bf 2.2. Vector-valued extension.} To discuss the vector-valued extension of {\bf Theorem 2.1}, we need the following assumption on $\phi$: there exists a positive constant
$C$ such that
\begin{eqnarray}
\int_{r}^{\infty}\frac{\phi(t)}{t}\frac{\mathrm{d}t}{t}\leq C\frac{\phi(r)}{r} ~~\rm for ~any ~r\in(0,\infty).
\end{eqnarray}
\begin{lemma}\rm \cite{CL}
Let $p\in(1,\infty)$, $\omega\in A_{p}(\mu)$ and $\phi: (0,\infty)\rightarrow(0,\infty)$ be an increasing function which satisfies (2.5), assume that the mapping $t\mapsto\frac{\phi(t)}{t}$ satisfies (2.4). Then there exists a positive constant $C$ such that for any ball $B\subset X$,
$$
\sum^{\infty}_{k=1}\bigg[\frac{\phi(\omega(2^{k}B))}{\omega(2^{k}B)}\bigg]^{\frac{1}{p}}\leq C \bigg[\frac{\phi(\omega(B))}{\omega(B)}\bigg]^{\frac{1}{p}}.$$
\end{lemma}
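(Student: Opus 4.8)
The plan is to reduce the series to a convergent geometric series by combining the geometric growth of the weighted measures $\omega(2^kB)$ with a quantitative decay estimate for the density $g(t):=\phi(t)/t$ extracted from the Dini-type condition (2.5). First I would record the geometric growth: iterating the reverse doubling inequality (2.2) of Lemma 2.2 gives $\omega(2^kB)\ge C_3^k\,\omega(B)$ for every $k\ge1$, where $C_3>1$. Thus the arguments $r_k:=\omega(2^kB)$ increase at least geometrically, and since $g$ is almost decreasing by (2.4), it will suffice to control $g$ at the points $C_3^k\omega(B)$.

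The heart of the argument is to show that (2.4) and (2.5) force $g$ to decay at a power rate: there exist $\gamma>0$ and $C>0$ such that $g(s)\le C (r/s)^\gamma g(r)$ whenever $s\ge r>0$. To prove this I would set $H(r):=\int_r^\infty g(t)\,\frac{dt}{t}$ and examine the dyadic tails $I_j:=H(2^jr)$. On each ring $[2^jr,2^{j+1}r]$ the almost monotonicity (2.4) pins $g$ between fixed multiples of its endpoint values, so that on one hand $I_j-I_{j+1}=\int_{2^jr}^{2^{j+1}r} g\,\frac{dt}{t}\gtrsim g(2^{j+1}r)$, while on the other hand (2.5) applied at the point $2^{j+1}r$ gives $g(2^{j+1}r)\gtrsim I_{j+1}$. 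Combining these yields $I_{j+1}\le \kappa^{-1} I_j$ with a fixed $\kappa>1$, hence $I_j\le\kappa^{-j}I_0$. Bounding $g(2^jr)$ from above by the preceding ring integral $I_{j-1}$ (again via (2.4)) and using $I_0=H(r)\le C g(r)$ from (2.5), one then obtains the geometric decay $g(2^jr)\le C\kappa^{-j}g(r)$, which upgrades to the stated power decay for arbitrary $s\ge r$ by one further application of almost monotonicity on the dyadic block containing the ratio $s/r$.

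Finally I would assemble the pieces. Applying the power decay with $r=\omega(B)$ and $s=\omega(2^kB)\ge C_3^k\omega(B)$ gives $g(\omega(2^kB))\le C\,C_3^{-k\gamma}g(\omega(B))$, whence $[g(\omega(2^kB))]^{1/p}\le C^{1/p}C_3^{-k\gamma/p}[g(\omega(B))]^{1/p}$. Summing the resulting geometric series in $k$, which converges because $C_3^{-\gamma/p}<1$, yields exactly $\sum_{k\ge1}[\phi(\omega(2^kB))/\omega(2^kB)]^{1/p}\le C[\phi(\omega(B))/\omega(B)]^{1/p}$, as claimed.

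I expect the middle step to be the main obstacle. The subtlety is that condition (2.5) is stated only for the first power of $\phi(t)/t$, whereas the conclusion involves the $1/p$-th power; the crux is therefore to convert the single integral bound (2.5), with the help of the almost monotonicity (2.4), into honest power decay of $\phi(t)/t$ that persists after being raised to the power $1/p$. Once that decay estimate is in hand, the geometric growth of $\omega(2^kB)$ and the final summation are routine.
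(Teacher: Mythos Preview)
The paper does not prove this lemma; it is stated with a citation to \cite{CL} and no argument is supplied. There is therefore no proof in the present paper to compare your proposal against.

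Your outline is nonetheless a correct and standard way to obtain the result. The crucial step---upgrading the Dini bound (2.5), with the help of the almost-decreasing hypothesis on $g(t)=\phi(t)/t$, to a genuine power decay $g(s)\le C(r/s)^{\gamma}g(r)$ for $s\ge r$---is exactly what is needed so that the estimate survives being raised to the power $1/p$; your dyadic telescoping derivation of this decay via $I_j=H(2^jr)$ is valid. One small caveat: inequality (2.4) as printed in the paper reads $g(t)\le Cg(s)$ for $s\ge t$, which literally describes an \emph{almost increasing} function and would be incompatible with the finiteness of the integral in (2.5); the intended condition, consistent with the words ``almost decreasing'' and with your argument, is $g(s)\le Cg(t)$ for $s\ge t$. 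With that understood, combining the power decay with the iterated reverse-doubling bound $\omega(2^kB)\ge C_3^{\,k}\omega(B)$ from Lemma~2.2 and summing the resulting geometric series completes the proof as you describe.
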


\begin{lemma}\rm \cite{FMY}
Let $r\in(1,\infty),p\in(1,\infty)$ and $\omega\in A_{p}(\mu)$. Then there exists a positive constant $C$, depending on $p$ and $r$, such that, for any
$\{f_i\}^{\infty}_{i=1}\subset L^{p}(\omega)$,
$$\bigg\|\bigg\{\sum_{j\in \mathbb{N}}\big[M(f_{j})\big]^{r}\bigg\}^{\frac{1}{r}}\bigg\|_{\mathcal{L}^{p}(\omega)}\leq C
\bigg\|\bigg\{\sum_{j\in \mathbb{N}}|f_j|^{r}\bigg\}^{\frac{1}{r}}\bigg\|_{\mathcal{L}^{p}(\omega)}.$$
\end{lemma}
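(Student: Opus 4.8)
The plan is to recognize the stated inequality as the weighted vector-valued Fefferman--Stein maximal inequality on the RD-space $(X,d,\mu)$ and to derive it from two facts available in this setting: the scalar Muckenhoupt bound $\|Mh\|_{L^{s}(\omega)}\le C\|h\|_{L^{s}(\omega)}$, valid for every $s\in(1,\infty)$ and every $\omega\in A_{s}(\mu)$, together with the Rubio de Francia extrapolation principle. The organizing idea is not to attack the $\ell^{r}$-sum directly but to treat $F:=\bigl(\sum_{j}[M(f_{j})]^{r}\bigr)^{1/r}$ and $G:=\bigl(\sum_{j}|f_{j}|^{r}\bigr)^{1/r}$ as a single scalar pair and to extrapolate in the exponent $p$ starting from the base exponent $p_{0}=r$.

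First I would dispose of the diagonal case $p=r$, in which $p/r=1$ and one merely sums the scalar bound over $j$:
\[
\int_{X}\Bigl(\sum_{j}[M(f_{j})]^{r}\Bigr)w\,d\mu=\sum_{j}\int_{X}[M(f_{j})]^{r}w\,d\mu\le C\sum_{j}\int_{X}|f_{j}|^{r}w\,d\mu=C\int_{X}\Bigl(\sum_{j}|f_{j}|^{r}\Bigr)w\,d\mu ,
\]
the middle inequality being the scalar Muckenhoupt estimate on $L^{r}(w)$. Rewritten, this reads $\int_{X}F^{r}w\,d\mu\le C\int_{X}G^{r}w\,d\mu$ for every $w\in A_{r}(\mu)$, which is exactly the hypothesis required to extrapolate the pair $(F,G)$ off the base exponent $p_{0}=r$.

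Second, I would run the extrapolation to pass from $p_{0}=r$ to an arbitrary $p\in(1,\infty)$. For $p>r$, dualizing the $L^{p/r}(w)$ norm produces a nonnegative $h$ with $\|h\|_{L^{(p/r)'}(w)}=1$ realizing $\|F^{r}\|_{L^{p/r}(w)}=\int_{X}F^{r}h\,w\,d\mu$; the Rubio de Francia iteration
\[
\mathcal{R}h:=\sum_{k=0}^{\infty}\frac{M_{w}^{k}h}{2^{k}\,\|M_{w}\|_{L^{(p/r)'}(w)}^{k}} ,
\]
built from the maximal operator $M_{w}$ associated with the measure $w\,d\mu$, yields a majorant $h\le\mathcal{R}h$ with $\|\mathcal{R}h\|_{L^{(p/r)'}(w)}\le 2$ and $\mathcal{R}h\in A_{1}(w\,d\mu)$. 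The product $W:=(\mathcal{R}h)\,w$ then lies in $A_{r}(\mu)$ with a controlled constant, so the base hypothesis applies with weight $W$, and Hölder's inequality in $L^{p/r}(w)$ closes the estimate against $\|G\|_{L^{p}(w)}^{r}$. The range $p<r$ is handled by the dual form of the same argument, and specializing $w=\omega$ gives the assertion.

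The main obstacle is making this extrapolation engine rigorous on the RD-space rather than on $\mathbb{R}^{n}$. Concretely, I must verify that $w\,d\mu$ is doubling whenever $w\in A_{\infty}(\mu)$, so that $M_{w}$ is bounded on $L^{s}(w)$ for every $s\in(1,\infty)$ (this is what makes $\mathcal{R}$ converge and furnishes the three stated properties of $\mathcal{R}h$), and the stability lemma asserting $A_{1}(w\,d\mu)\cdot A_{r}(\mu)\subset A_{r}(\mu)$ with quantitative control of the constant. Both rest only on the doubling condition (1.1)--(1.2), the covering lemmas available on spaces of homogeneous type, and Lemmas 2.1--2.2; the finiteness $\mu(X)<\infty$ causes no difficulty and at worst restricts the admissible radii in the definition of $M$. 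Assembling these pieces, with the scalar Muckenhoupt theorem as the sole analytic input, completes the proof.
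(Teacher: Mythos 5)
You cannot be compared against the paper's own proof here, because the paper has none: this lemma is imported verbatim from Fu, Ma and Yang \cite{FMY}, with no internal argument. Judged as a self-contained substitute, your proposal follows the standard Rubio de Francia extrapolation derivation of the weighted Fefferman--Stein inequality, and for $p\geq r$ it is essentially sound. The diagonal case $p=r$ is exactly right, granted the scalar Muckenhoupt theorem on spaces of homogeneous type with constant depending only on $[w]_{A_r(\mu)}$ (available, e.g., from the paper's reference \cite{ST1}). The two auxiliary facts you flag for the RD-setting are both true and cheap: $w\,\mathrm{d}\mu$ is doubling for $w\in A_\infty(\mu)$ (for $A_p$ weights this is already recorded as Lemma 2.2 of the paper), so $M_w$ is bounded on $L^s(w\,\mathrm{d}\mu)$ for every $s\in(1,\infty)$ by the Vitali covering argument, which makes $\mathcal{R}$ converge with the three stated properties; and your stability lemma has a two-line proof: if $u\in A_1(w\,\mathrm{d}\mu)$ then $\frac{1}{\mu(B)}\int_B uw\,\mathrm{d}\mu\leq C\,(\esinf_{B}u)\,\frac{w(B)}{\mu(B)}$, while $u^{1-r'}\leq(\esinf_{B}u)^{1-r'}$ pointwise on $B$ since $1-r'<0$, so the two factors in the $A_r$ product cancel the $\esinf_{B}u$ terms and give $[uw]_{A_r(\mu)}\leq C\,[u]_{A_1(w\,\mathrm{d}\mu)}[w]_{A_r(\mu)}$. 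The only housekeeping you omit in this half is qualitative finiteness before dualizing $\|F^r\|_{L^{p/r}(w)}$: run the argument for the truncations $\sum_{j\leq N}$ and let $N\to\infty$ by monotone convergence.

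The one genuine gap is the clause disposing of $1<p<r$ ``by the dual form of the same argument.'' Neither $M$ nor the passage from the pair $(F,G)$ at exponent $r$ to exponent $p$ is linear, so there is no adjoint to take, and the downward half of extrapolation is a genuinely different computation: one applies a Rubio de Francia algorithm (built from the $\mu$-maximal operator, or from $h\mapsto M(hw)/w$) to a function manufactured from $F$ itself, splits $\int_X F^p w\,\mathrm{d}\mu$ by H\"{o}lder with exponents $r/p$ and $(r/p)'$ against suitable negative powers of the algorithm's output, and then verifies that the resulting weight lies in $A_r(\mu)$ --- a verification that does \emph{not} follow from your product lemma (note, for instance, that a product of two $A_1$ weights need not be $A_1$, so the naive factorization route fails). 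None of this is difficult and all of it is standard on spaces of homogeneous type, but it is an argument your proposal does not contain; alternatively, at that point you could invoke the off-the-shelf extrapolation theorem for pairs, or the classical inequality $\int_X (Mf)^r u\,\mathrm{d}\mu\leq C\int_X |f|^r Mu\,\mathrm{d}\mu$ that underlies the original Fefferman--Stein treatment of small exponents. With that half supplied, your proof is correct and, unlike the paper, self-contained.
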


\begin{theorem}\ \
Let $1<p,r<\infty$, $\omega\in A_{p}(\mu)$, $\varphi\in\Phi_{p}$ and $\phi: (0,\infty)\rightarrow(0,\infty)$ be an increasing function that satisfies (2.5). Let $M$ be as in (2.1). Assume that the mapping $t\mapsto\frac{\phi(t)}{t}$ satisfies (2.4). Then there exists a positive constant $C$, depending on $p$ and $r$, such that for any $\{f_j\}^{\infty}_{j=1}\subset \mathcal{L}^{p),\phi}_{\varphi}(\omega)$,
\begin{eqnarray*}
\bigg\|\bigg\{\sum_{j\in \mathbb{N}}\big[M(f_{j})\big]^{r}\bigg\}^{\frac{1}{r}}\bigg\|_{\mathcal{L}^{p),\phi}_{\varphi}(\omega)}\leq C
\bigg\|\bigg\{\sum_{j\in \mathbb{N}}|f_j|^{r}\bigg\}^{\frac{1}{r}}\bigg\|_{\mathcal{L}^{p),\phi}_{\varphi}(\omega)}.
\end{eqnarray*}
\end{theorem}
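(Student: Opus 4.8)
The plan is to follow the two‑part decomposition of the exponent $\varepsilon$ used in the proof of Theorem 2.1, but with the scalar boundedness of $M$ replaced throughout by its vector‑valued analogue on the \emph{ordinary} generalized weighted Morrey space $\mathcal{L}^{q,\phi}(\omega)$. Thus the first and main step is to establish, for each fixed $q$ with $\omega\in A_{q}(\mu)$, the building block
\[
\|G\|_{\mathcal{L}^{q,\phi}(\omega)}\le C\|F\|_{\mathcal{L}^{q,\phi}(\omega)},\qquad G:=\Big(\sum_{j}[M f_{j}]^{r}\Big)^{1/r},\quad F:=\Big(\sum_{j}|f_{j}|^{r}\Big)^{1/r}.
\]
Fixing a ball $B$, I would split each $f_{j}=f_{j}\chi_{2B}+f_{j}\chi_{X\setminus 2B}=:f_{j}^{0}+f_{j}^{\infty}$ and use the $\ell^{r}$‑triangle inequality to write $G\le G^{0}+G^{\infty}$ with the obvious meaning. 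For the local part $G^{0}$, Lemma 2.5 gives $\|G^{0}\|_{L^{q}(\omega)}\le C\|F\chi_{2B}\|_{L^{q}(\omega)}$; since the doubling estimate (2.3) together with the (almost) monotonicity (2.4) of $t\mapsto\phi(t)/t$ yields $\phi(\omega(2B))\le C\phi(\omega(B))$, the factor $[\phi(\omega(B))]^{-1/q}$ may be replaced by $[\phi(\omega(2B))]^{-1/q}$ at the cost of a constant, and the right‑hand side is then recognised as $\le C\|F\|_{\mathcal{L}^{q,\phi}(\omega)}$.

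The delicate part is the tail $G^{\infty}$. For $x\in B$ I would use the standard annular pointwise bound $M f_{j}^{\infty}(x)\le C\sum_{k\ge 1}\frac{1}{\mu(2^{k}B)}\int_{2^{k}B}|f_{j}|\,\mathrm{d}\mu$, take the $\ell^{r}$‑norm in $j$, and interchange it with the sum over $k$ and with the averaging integral by Minkowski's inequality, obtaining $G^{\infty}(x)\le C\sum_{k\ge 1}\frac{1}{\mu(2^{k}B)}\int_{2^{k}B}F\,\mathrm{d}\mu$. Applying H\"older's inequality with the pair $(q,q')$ and the $A_{q}(\mu)$ condition to each average converts the $k$‑th term into $C\,[\phi(\omega(2^{k}B))/\omega(2^{k}B)]^{1/q}\,\|F\|_{\mathcal{L}^{q,\phi}(\omega)}$; summing in $k$ and invoking Lemma 2.4 collapses the series to $C\,[\phi(\omega(B))/\omega(B)]^{1/q}\,\|F\|_{\mathcal{L}^{q,\phi}(\omega)}$. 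Since this bound on $B$ is uniform in $x$, integrating $(G^{\infty})^{q}\omega$ over $B$ and multiplying by $[\phi(\omega(B))]^{-1/q}$ returns exactly $C\|F\|_{\mathcal{L}^{q,\phi}(\omega)}$; combining with the local estimate and taking the supremum over $B$ finishes the building block.

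With the vector‑valued $\mathcal{L}^{q,\phi}(\omega)$ inequality in hand, I would lift it to the grand space verbatim as in Theorem 2.1. Choosing $\delta$ small enough that $\omega\in A_{p-\delta}(\mu)$ --- legitimate by the self‑improving (openness) property of the Muckenhoupt classes on RD‑spaces, whence $\omega\in A_{p-\varepsilon}(\mu)$ for every $0<\varepsilon\le\delta$ --- I split $\sup_{0<\varepsilon<p-1}$ into $\mathrm{E}_{1}:=\sup_{0<\varepsilon\le\delta}$ and $\mathrm{E}_{2}:=\sup_{\delta<\varepsilon<p-1}$. In $\mathrm{E}_{1}$ the building block applies directly at each exponent $p-\varepsilon$, and boundedness of $\varphi$ together with (1.4) gives $\mathrm{E}_{1}\le C\|F\|_{\mathcal{L}^{p),\phi}_{\varphi}(\omega)}$. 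In $\mathrm{E}_{2}$, since $p-\varepsilon<p-\delta$, H\"older's inequality trades the exponent $p-\varepsilon$ for $p-\delta$ at the price of the factor $\omega(B)^{(\varepsilon-\delta)/((p-\varepsilon)(p-\delta))}$; the resulting power $\big(\omega(B)/\phi(\omega(B))\big)^{(\varepsilon-\delta)/((p-\varepsilon)(p-\delta))}$ is controlled uniformly because $\mu(X)<\infty$ forces $\omega(B)\le\omega(X)<\infty$ and (2.4) then bounds $\omega(B)/\phi(\omega(B))$, after which the building block at exponent $p-\delta$ and the boundedness of $\varphi$ again yield $\mathrm{E}_{2}\le C\|F\|_{\mathcal{L}^{p),\phi}_{\varphi}(\omega)}$.

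The main obstacle I anticipate is the tail estimate for $G^{\infty}$: one must justify the two interchanges of summation and integration with the $\ell^{r}$‑norm by Minkowski's inequality and then sum the annular contributions via Lemma 2.4, all while keeping the weighted $A_{q}$ bookkeeping exact so that the geometric factor is precisely $[\phi(\omega(2^{k}B))/\omega(2^{k}B)]^{1/q}$. The only other point requiring care is the passage from $\omega\in A_{p}(\mu)$ to $\omega\in A_{p-\delta}(\mu)$, which rests on the openness of the Muckenhoupt classes in the RD‑space setting and fixes the admissible range of the splitting parameter $\delta$.
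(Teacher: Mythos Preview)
Your proposal is correct and follows the same overall scheme as the paper: split $\sup_{0<\varepsilon<p-1}$ at a threshold $\delta$ with $\omega\in A_{p-\delta}(\mu)$, control the near range $0<\varepsilon\le\delta$ by the vector-valued inequality on the ordinary space $\mathcal{L}^{p-\varepsilon,\phi}(\omega)$, and control the far range $\delta<\varepsilon<p-1$ by H\"older's inequality to pass to the fixed exponent $p-\delta$. The difference lies only in what is proved versus cited. For the near range the paper simply invokes the vector-valued bound on $\mathcal{L}^{p-\varepsilon,\phi}(\omega)$ as known from \cite{CL}, and in the far range it applies Lemma~2.5 (Fefferman--Stein on $L^{p-\delta}(\omega)$) directly after H\"older; you instead supply a full proof of the Morrey building block via the local--tail split, the pointwise annular estimate together with two applications of Minkowski's inequality, the $A_{q}$ bookkeeping, and Lemma~2.4 to sum the dyadic shells, and then reuse that building block at level $p-\delta$ in the far range as well. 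Your route is longer but self-contained and makes transparent where hypothesis~(2.5), through Lemma~2.4, actually enters; the paper's route is shorter at the cost of outsourcing the key intermediate estimate.
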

\begin{proof}[\textbf{Proof}]
Choosing a small $\delta$ such that $0<\varepsilon\leq \delta<p-1$, then, by applying {\bf Definition 1.1}, observe that
\begin{eqnarray*}
&&\bigg\|\bigg\{\sum_{j\in \mathbb{N}}\big[M(f_{j})\big]^{r}\bigg\}^{\frac{1}{r}}\bigg\|_{\mathcal{L}^{p),\phi}_{\varphi}(\omega)}\\
&&\quad\quad=
\mathop{\sup}\limits_{0<\varepsilon<p-1}\varphi(\varepsilon)\bigg\|\bigg\{\sum_{j\in \mathbb{N}}\big[M(f_{j})\big]^{r}\bigg\}^{\frac{1}{r}}\bigg\|_{\mathcal{L}^{p-\varepsilon,\phi}(\omega)}\\
&&\quad\quad\leq \mathop{\sup}\limits_{0<\varepsilon<\delta}\varphi(\varepsilon)\bigg\|\bigg\{\sum_{j\in \mathbb{N}}\big[M(f_{j})\big]^{r}\bigg\}^{\frac{1}{r}}\bigg\|_{\mathcal{L}^{p-\varepsilon,\phi}(\omega)}\\
&&\quad\quad\quad +\mathop{\sup}\limits_{\delta<\varepsilon<p-1}\varphi(\varepsilon)\bigg\|\bigg\{\sum_{j\in \mathbb{N}}\big[M(f_{j})\big]^{r}\bigg\}^{\frac{1}{r}}\bigg\|_{\mathcal{L}^{p-\varepsilon,\phi}(\omega)}\\
&&\quad\quad=:\mathrm{F}_{1}+\mathrm{F}_{2}.
\end{eqnarray*}
The estimates for $\mathrm{F}_{1}$ is given as follows. From {\bf Definition 1.1} and the $\mathcal{L}^{p,\phi}(\omega)-$boundedness of $\mathrm{M}$ (see\cite{CL}), it follows that
\begin{eqnarray*}
\mathop{\sup}\limits_{0<\varepsilon<\delta}\varphi(\varepsilon)\bigg\|\bigg\{\sum_{j\in \mathbb{N}}\big[M(f_{j})\big]^{r}\bigg\}^{\frac{1}{r}}\bigg\|_{\mathcal{L}^{p-\varepsilon,\phi}(\omega)}&\leq&
C\mathop{\sup}\limits_{0<\varepsilon<\delta}\varphi(\varepsilon)\bigg\|\bigg\{\sum_{j\in\mathbb{N}}|f_{j}|^{r}\bigg\}^{\frac{1}{r}}\bigg\|_{\mathcal{L}^{p-\varepsilon,\phi}(\omega)}\\
&\leq&C\bigg\|\bigg\{\sum_{j\in \mathbb{N}}|f_j|^{r}\bigg\}^{\frac{1}{r}}\bigg\|_{\mathcal{L}^{p),\phi}_{\varphi}(\omega)}.
\end{eqnarray*}

Similar to the estimate of $\mathrm{E}_{2}$ in the proof of {\bf Theorem 2.1}, By virtue of H\"{o}lder's inequality and {\bf Lemma 2.5}, we have
\begin{eqnarray*}
&&\mathop{\sup}\limits_{\delta<\varepsilon<p-1}\varphi(\varepsilon)\mathop{\sup}\limits_{B\subset X}[\phi(\omega(B))]^{-\frac{1}{p-\varepsilon}}
\bigg\|\bigg\{\sum_{j\in \mathbb{N}}\big[M(f_{j})\big]^{r}\bigg\}^{\frac{1}{r}}\bigg\|_{\mathcal{L}^{p-\varepsilon}(\omega)}\\
&&\quad \quad\quad \leq \mathop{\sup}\limits_{\delta<\varepsilon<p-1}\varphi(\varepsilon)\mathop{\sup}\limits_{B\subset X}[\phi(\omega(B))]^{-\frac{1}{p-\varepsilon}}
\bigg\|\bigg\{\sum_{j\in \mathbb{N}}\big[M(f_{j})\big]^{r}\bigg\}^{\frac{1}{r}}
\bigg\|_{\mathcal{L}^{p-\delta}(\omega)}(\omega(B))^{\frac{\varepsilon-\delta}{(p-\varepsilon)(p-\delta)}}\\
&&\quad \quad\quad \leq C \mathop{\sup}\limits_{\delta<\varepsilon<p-1}\varphi(\varepsilon)\mathop{\sup}\limits_{B\subset X}[\phi(\omega(B))]^{-\frac{1}{p-\varepsilon}}
\bigg\|\bigg\{\sum_{j\in \mathbb{N}}|f_{j}|^{r}\bigg\}^{\frac{1}{r}}
\bigg\|_{\mathcal{L}^{p-\delta}(\omega)}(\omega(B))^{\frac{\varepsilon-\delta}{(p-\varepsilon)(p-\delta)}}\\
&&\quad \quad\quad \leq C \mathop{\sup}\limits_{\delta<\varepsilon<p-1}\varphi(\varepsilon)[\varphi(\delta)]^{-1}\varphi(\delta)\mathop{\sup}\limits_{B\subset X}[\phi(\omega(B))]^{-\frac{1}{p-\varepsilon}}[\phi(\omega(B))]^{\frac{1}{p-\delta}}\\
&&\quad \quad\quad\quad \times[\phi(\omega(B))]^{-\frac{1}{p-\delta}}\bigg\|\bigg\{\sum_{j\in \mathbb{N}}|f_{j}|^{r}\bigg\}^{\frac{1}{r}}
\bigg\|_{\mathcal{L}^{p-\delta}(\omega)}(\omega(B))^{\frac{\varepsilon-\delta}{(p-\varepsilon)(p-\delta)}}\\
&&\quad \quad\quad \leq C \phi(p-1)[\phi(\delta)]^{-1}\bigg\|\bigg\{\sum_{j\in \mathbb{N}}|f_j|^{r}\bigg\}^{\frac{1}{r}}\bigg\|_{\mathcal{L}^{p),\phi}_{\varphi}(\omega)}\\
&&\quad \quad\quad \leq C\bigg\|\bigg\{\sum_{j\in \mathbb{N}}|f_j|^{r}\bigg\}^{\frac{1}{r}}\bigg\|_{\mathcal{L}^{p),\phi}_{\varphi}(\omega)}.\\
\end{eqnarray*}
Which, together with the estimate for $\mathrm{F}_{1}$, is our desired result.
\end{proof}

\section{$\theta$-Type  Calder\'{o}n-Zygmund operators on $\mathcal{L}^{p),\phi}_{\varphi}(\omega)$}
In this section deal with the boundedness of the $\theta$-type  Calder\'{o}n-Zygmund operators and its commutator on grand generalized weighted Morrey space $\mathcal{L}^{p),\phi}_{\varphi}(\omega)$ over RD-spaces.

The following definition see, Duong \textit{et.~al}. \cite{D}.
\begin{definition}\rm\ \
Let $\theta$ be a non-negative and non-decreasing function on $[0,\infty)$ with satisfying
\begin{eqnarray}
\int^{1}_{0}\frac{\theta(t)}{t}\mathrm{d}t<\infty.
\end{eqnarray}

And the measurable function  $K(\cdot,\cdot)$ on $X\times X\backslash\{(x,y): x\in X\}$ is called $\theta$-type kernel, if for any $x\neq y$,
\begin{eqnarray}
|K(x,y)|\leq \frac{C}{V(x,y)},
\end{eqnarray}
and for $d(x,z)<\frac{d(x,y)}{2}$,
\begin{eqnarray}
|K(x,y)-K(z,y)|+|K(y,x)-K(y,z)|\leq \frac{C}{V(x,y)}\theta\bigg(\frac{d(x,z)}{d(x,y)}\bigg).
\end{eqnarray}
\end{definition}

\begin{remark}\rm
If we take the function $\theta(t)=t^{\delta}$ with $t>0$ and $\delta \in (0,1]$. Then $K(x,y)$ defined as in Definition 3.1 is just the standard kernel.
\end{remark}

\begin{definition}\rm
Let $b$ a real valued $\mu-$measurable function on $X$, if $b \in L^{1}_{loc}(\mu)$ and its norm is
$$\|b\|_{\ast}:=\sup_{B}\frac{1}{\mu(B)}\int_{B}|b(x)-b_{B}|\mathrm{d}\mu(x)<\infty,$$
then $b$ is called a $\mathrm{BMO(\mu)}$ function, where the supremum is taken over all $B\subset X$ and $$b_B:=\frac{1}{\mu(B)}\int_{B}b(y)\mathrm{d}\mu(y).$$
\end{definition}
Let $L^{\infty}_{b}(\mu)$ be the space of all $L^{\infty}(\mu)$ functions with bounded support. A linear operator $T_{\theta}$ is called a $\theta$-type  Calder\'{o}n-Zygmund operator with kernel $K(x,y)$ satisfying (3.2) and (3.3). Moreover, $T_{\theta}$ can be extended to a bounded linear operator on $L^{2}(X)$,
\begin{eqnarray}
T_{\theta}f(x):=\int_{X}K(x,y)f(y)\mathrm{d}\mu(y)
\end{eqnarray}
for all $f\in L^{\infty}_{b}(\mu)$ and $x\notin\mathrm{supp}(f)$.

Given a locally integrable function $b$ and $\theta$-type  Calder\'{o}n-Zygmund operator $T_{\theta}$ on $X$, the linear commutator $[b,T_{\theta}]$ is defined as£»
\begin{eqnarray}
[b,T_{\theta}]f(x):=b(x)T_{\theta}f(x)-T_{\theta}(bf)(x)=\int_{X}[b(x)-b(y)]K(x,y)f(y)\mathrm{d}\mu(y).
\end{eqnarray}

The main theorems of this section is stated as follows.\\
\begin{theorem}\ \
Let $p\in(1,\infty)$, $\omega\in A_{p}(\mu)$, $\varphi\in\Phi_{p}$. Let $\phi: (0,\infty)\rightarrow(0,\infty)$ be an increasing function, continuous function satisfying conditions (2.4) and (2.5).
Then $T_{\theta}$ defined as in (3.4) is bounded on $\mathcal{L}^{p),\phi}_{\varphi}(\omega)$, that is, there exists a constant $C>0$ such that, for all $f\in\mathcal{L}^{p),\phi}_{\varphi}(\omega)$,
$$
\|T_{\theta}(f)\|_{\mathcal{L}^{p),\phi}_{\varphi}(\omega)}\leq C\|f\|_{\mathcal{L}^{p),\phi}_{\varphi}(\omega)}.
$$
\end{theorem}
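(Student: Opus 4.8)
The plan is to reduce the grand estimate to the boundedness of $T_\theta$ on the ordinary generalized weighted Morrey space $\mathcal{L}^{p,\phi}(\omega)$ over RD-spaces, which is available from \cite{CL,LLW} under the hypotheses $\omega\in A_p(\mu)$ and conditions (2.4), (2.5) on $\phi$. The entire analytic weight of the statement sits in that non-grand boundedness (where condition (2.5) is used to sum the contributions of the dyadic annuli, exactly as in Lemma 2.4); the passage to the grand space is then an extrapolation-type argument mirroring, step for step, the proof of Theorem 2.1. First I would record that, by the self-improving (openness) property of Muckenhoupt weights, $\omega\in A_p(\mu)$ yields $\omega\in A_{p-\varepsilon}(\mu)$ for all $\varepsilon$ in some interval $(0,\delta_0]$; fixing $\delta\in(0,\delta_0]$ therefore makes $T_\theta$ bounded on $\mathcal{L}^{p-\varepsilon,\phi}(\omega)$ uniformly for $0<\varepsilon\le\delta$.

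Next I would split the defining supremum of the grand norm at the level $\delta$, writing
$$\|T_\theta f\|_{\mathcal{L}^{p),\phi}_{\varphi}(\omega)}\le \mathrm{G}_1+\mathrm{G}_2,$$
where $\mathrm{G}_1$ is the supremum over $0<\varepsilon\le\delta$ and $\mathrm{G}_2$ the supremum over $\delta<\varepsilon<p-1$ of $\varphi(\varepsilon)\|T_\theta f\|_{\mathcal{L}^{p-\varepsilon,\phi}(\omega)}$. For $\mathrm{G}_1$ I would apply the $\mathcal{L}^{p-\varepsilon,\phi}(\omega)$-boundedness of $T_\theta$ directly, so that $\mathrm{G}_1\le C\sup_{0<\varepsilon\le\delta}\varphi(\varepsilon)\|f\|_{\mathcal{L}^{p-\varepsilon,\phi}(\omega)}\le C\|f\|_{\mathcal{L}^{p),\phi}_{\varphi}(\omega)}$, using that $\varphi$ is bounded.

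The term $\mathrm{G}_2$ is where the real bookkeeping lies. Since $\delta<\varepsilon<p-1$ we have $\tfrac{p-\delta}{p-\varepsilon}>1$, and because $\mu(X)<\infty$ I may use the inclusion of Lebesgue spaces on the finite measure $\omega\,\mathrm{d}\mu$ restricted to a ball $B$: by H\"older's inequality $\|T_\theta f\|_{L^{p-\varepsilon}(\omega,B)}\le \|T_\theta f\|_{L^{p-\delta}(\omega,B)}\,\omega(B)^{\frac{\varepsilon-\delta}{(p-\varepsilon)(p-\delta)}}$. Inserting the factor $\varphi(\delta)[\varphi(\delta)]^{-1}$ and splitting the power of $\phi(\omega(B))$ as $[\phi(\omega(B))]^{-\frac{1}{p-\varepsilon}}=[\phi(\omega(B))]^{\frac{1}{p-\delta}-\frac{1}{p-\varepsilon}}[\phi(\omega(B))]^{-\frac{1}{p-\delta}}$, the piece $\varphi(\delta)\sup_B[\phi(\omega(B))]^{-\frac{1}{p-\delta}}\|T_\theta f\|_{L^{p-\delta}(\omega,B)}$ is exactly $\varphi(\delta)\|T_\theta f\|_{\mathcal{L}^{p-\delta,\phi}(\omega)}$, which is bounded by $C\|f\|_{\mathcal{L}^{p),\phi}_{\varphi}(\omega)}$ by the non-grand boundedness of $T_\theta$ followed by the definition of the grand norm.

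It then remains to show the residual factor $\frac{\varphi(\varepsilon)}{\varphi(\delta)}\big[\phi(\omega(B))\big]^{\frac{1}{p-\delta}-\frac{1}{p-\varepsilon}}\omega(B)^{\frac{\varepsilon-\delta}{(p-\varepsilon)(p-\delta)}}=\frac{\varphi(\varepsilon)}{\varphi(\delta)}\Big[\tfrac{\omega(B)}{\phi(\omega(B))}\Big]^{\frac{\varepsilon-\delta}{(p-\varepsilon)(p-\delta)}}$ is bounded uniformly in $B$ and $\varepsilon$, which I regard as the main (if modest) obstacle within the reduction. Here $\varphi(\varepsilon)/\varphi(\delta)\le C(\delta)$ since $\varphi$ is bounded and $\delta$ is fixed; the exponent $\frac{\varepsilon-\delta}{(p-\varepsilon)(p-\delta)}$ is positive and stays in a bounded range; and the almost-decreasing hypothesis (2.4) on $t\mapsto\phi(t)/t$, combined with $\mu(X)<\infty$ (so $\omega(B)\le\omega(X)<\infty$), forces $\omega(B)/\phi(\omega(B))\le C\,\omega(X)/\phi(\omega(X))$. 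Thus the residual factor is controlled by a constant, giving $\mathrm{G}_2\le C\|f\|_{\mathcal{L}^{p),\phi}_{\varphi}(\omega)}$, and combining this with the bound for $\mathrm{G}_1$ completes the proof.
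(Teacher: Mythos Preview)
Your proof is correct and follows essentially the same approach as the paper's own proof of Theorem~3.1: split the supremum defining the grand norm at a threshold $\delta$, invoke the non-grand Morrey boundedness of $T_\theta$ (the paper states and proves this as Lemma~3.1) for $0<\varepsilon\le\delta$, and for $\delta<\varepsilon<p-1$ reduce to the exponent $p-\delta$ via H\"older's inequality, controlling the residual factor $\bigl[\omega(B)/\phi(\omega(B))\bigr]^{\frac{\varepsilon-\delta}{(p-\varepsilon)(p-\delta)}}$ by a constant. Your explicit use of the openness of $A_p$ to ensure $\omega\in A_{p-\varepsilon}$ for small $\varepsilon$, and your appeal to~(2.4) together with $\mu(X)<\infty$ to bound $\omega(B)/\phi(\omega(B))$, are in fact somewhat more careful than the paper's presentation, which handles the residual factor by writing $S\le\omega(B)^{\frac{p-1-\delta}{p-\delta}}\le C$ (implicitly using $\omega(X)<\infty$) without invoking~(2.4) directly.
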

\begin{theorem}\ \
Let $p\in(1,\infty)$, $\omega\in A_{p}(\mu)$, $b\in \mathrm{BMO(\mu)}$, $\varphi\in\Phi_{p}$. Let $\phi: (0,\infty)\rightarrow(0,\infty)$ be an increasing function, continuous function satisfying conditions (2.4) and (2.5).
Then $[b,T_{\theta}]$ defined as in (3.5) is bounded on $\mathcal{L}^{p),\phi}_{\varphi}(\omega)$, that is, there exists a constant $C>0$ such that, for all $f\in\mathcal{L}^{p),\phi}_{\varphi}(\omega)$,
$$
\|[b,T_{\theta}]f\|_{\mathcal{L}^{p),\phi}_{\varphi}(\omega)}\leq C\|b\|_{BMO(\mu)}\|f\|_{\mathcal{L}^{p),\phi}_{\varphi}(\omega)}.
$$
\end{theorem}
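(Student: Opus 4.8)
The plan is to reproduce, almost verbatim, the two-region splitting in $\varepsilon$ that drives the proofs of Theorem 2.1 and Theorem 3.1, importing as the one genuinely nontrivial ingredient the boundedness of the commutator $[b,T_\theta]$ on the (non-grand) generalized weighted Morrey space $\mathcal{L}^{p,\phi}(\omega)$ established by Li \emph{et al.} in \cite{LLW}; conditions (2.4) and (2.5) are exactly what make that result applicable. Before splitting, I would fix a threshold $\delta\in(0,p-1)$ small enough that $\omega\in A_{p-\delta}(\mu)$. This is possible by the self-improving (openness) property of Muckenhoupt weights, and by the nesting $A_{p-\delta}(\mu)\subset A_{p-\varepsilon}(\mu)$ for $0<\varepsilon\le\delta$ it guarantees that $[b,T_\theta]$ is bounded on $\mathcal{L}^{p-\varepsilon,\phi}(\omega)$ for every such $\varepsilon$, as well as on $\mathcal{L}^{p-\delta,\phi}(\omega)$, each time with operator norm controlled by $C\|b\|_{\mathrm{BMO}(\mu)}$.

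With $\delta$ so fixed, I would write $\|[b,T_\theta]f\|_{\mathcal{L}^{p),\phi}_{\varphi}(\omega)}=\sup_{0<\varepsilon<p-1}\varphi(\varepsilon)\|[b,T_\theta]f\|_{\mathcal{L}^{p-\varepsilon,\phi}(\omega)}$ and split the outer supremum into a part $\mathrm{G}_1$ taken over $0<\varepsilon\le\delta$ and a part $\mathrm{G}_2$ taken over $\delta<\varepsilon<p-1$. For $\mathrm{G}_1$ I apply the $\mathcal{L}^{p-\varepsilon,\phi}(\omega)$-boundedness of $[b,T_\theta]$ directly, obtaining $\varphi(\varepsilon)\|[b,T_\theta]f\|_{\mathcal{L}^{p-\varepsilon,\phi}(\omega)}\le C\|b\|_{\mathrm{BMO}(\mu)}\,\varphi(\varepsilon)\|f\|_{\mathcal{L}^{p-\varepsilon,\phi}(\omega)}$; taking the supremum over $0<\varepsilon\le\delta$ then bounds $\mathrm{G}_1$ by $C\|b\|_{\mathrm{BMO}(\mu)}\|f\|_{\mathcal{L}^{p),\phi}_{\varphi}(\omega)}$.

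For $\mathrm{G}_2$ the exponent ratio $(p-\delta)/(p-\varepsilon)>1$ lets me apply H\"older's inequality on each ball $B$ to pass from the $L^{p-\varepsilon}(\omega)$-norm to the $L^{p-\delta}(\omega)$-norm, producing the factor $(\omega(B))^{\frac{\varepsilon-\delta}{(p-\varepsilon)(p-\delta)}}$. Inserting $[\varphi(\delta)]^{-1}\varphi(\delta)$ and regrouping the powers of $\phi(\omega(B))$ exactly as in the estimate of $\mathrm{E}_2$ in Theorem 2.1, the commutator boundedness at the fixed exponent $p-\delta$ reduces $\mathrm{G}_2$ to controlling the residual factor $\bigl(\omega(B)/\phi(\omega(B))\bigr)^{\alpha}$ with $\alpha=\frac{\varepsilon-\delta}{(p-\varepsilon)(p-\delta)}>0$. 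Since $t\mapsto\phi(t)/t$ is almost decreasing by (2.4), the map $t\mapsto t/\phi(t)$ is almost increasing, and because $\mu(X)<\infty$ forces $\omega(B)\le\omega(X)$, this residual factor stays bounded uniformly in $B$ and in $\varepsilon$; the prefactor $\varphi(\varepsilon)[\varphi(\delta)]^{-1}$ is bounded because $\varphi$ is bounded and $\varphi(\delta)>0$ is fixed. Hence $\mathrm{G}_2\le C\|b\|_{\mathrm{BMO}(\mu)}\|f\|_{\mathcal{L}^{p),\phi}_{\varphi}(\omega)}$, and combining with $\mathrm{G}_1$ yields the claim.

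I expect essentially all the real difficulty to sit outside this argument, inside the cited commutator bound on $\mathcal{L}^{p,\phi}(\omega)$ from \cite{LLW}, where the BMO factor $\|b\|_{\mathrm{BMO}(\mu)}$, the kernel conditions (3.2)--(3.3), and the Dini-type hypothesis (3.1) on $\theta$ are genuinely used. Within the present extrapolation step the only subtle points are the two I flagged above: invoking the self-improving property to secure $\omega\in A_{p-\varepsilon}(\mu)$ down to the chosen $\delta$, and keeping the residual weight factor bounded, which depends crucially on the standing assumption $\mu(X)<\infty$ together with (2.4). Once these are in hand the splitting is entirely routine and parallels Theorem 2.1 line for line, now carrying the multiplicative constant $\|b\|_{\mathrm{BMO}(\mu)}$ throughout.
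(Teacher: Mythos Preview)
Your proposal is correct and follows essentially the same route as the paper's own proof: split the supremum in $\varepsilon$ at a threshold $\delta$, handle the small-$\varepsilon$ range by the $\mathcal{L}^{p-\varepsilon,\phi}(\omega)$-boundedness of $[b,T_\theta]$ from \cite{LLW} (the paper's Lemma~3.2), and treat the large-$\varepsilon$ range by H\"older's inequality down to exponent $p-\delta$, controlling the residual factor $\bigl(\omega(B)/\phi(\omega(B))\bigr)^{\alpha}$. If anything, you are more explicit than the paper on the two delicate points---the openness of $A_p(\mu)$ needed to ensure $\omega\in A_{p-\varepsilon}(\mu)$, and the use of $\mu(X)<\infty$ together with (2.4) to bound the residual factor---where the paper is rather terse.
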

To formulate the above theorems we also need the following lemma.
\begin{lemma}\rm
Let $p\in(1,\infty)$, $\omega\in A_{p}(\mu)$. Let $\phi: (0,\infty)\rightarrow(0,\infty)$ be an increasing function, continuous function satisfying conditions (2.4) and (2.5) and $\theta$ be a non-negative, non-decreasing function on $(0,\infty)$ with satisfying (3.1).
Then $T_{\theta}$ defined as in (3.4) is bounded on $\mathcal{L}^{p,\phi}(\omega)$, that is, there exists a constant $C>0$ such that, for all $f\in\mathcal{L}^{p,\phi}(\omega)$,
$$
\|T_{\theta}(f)\|_{\mathcal{L}^{p,\phi}(\omega)}\leq C\|f\|_{\mathcal{L}^{p,\phi}(\omega)}.
$$
\end{lemma}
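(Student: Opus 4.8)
The plan is to run the classical ball-decomposition argument adapted to the generalized weighted Morrey setting, reducing everything to the two ingredients available here: the weighted $L^{p}(\omega)$-boundedness of $T_{\theta}$, which holds for $\omega\in A_{p}(\mu)$ because the Dini condition (3.1) together with the smoothness estimate (3.3) makes $T_{\theta}$ a genuine Calder\'on--Zygmund operator, and the summation estimate of \textbf{Lemma 2.4}. Fix an arbitrary ball $B=B(x_{0},r)$ and write $f=f_{1}+f_{2}$ with $f_{1}:=f\chi_{2B}$ and $f_{2}:=f\chi_{X\setminus 2B}$. By the definition (1.5) of the $\mathcal{L}^{p,\phi}(\omega)$-norm it suffices to bound $[\phi(\omega(B))]^{-1/p}\|T_{\theta}f\|_{L^{p}(\omega,B)}$ by $C\|f\|_{\mathcal{L}^{p,\phi}(\omega)}$ uniformly in $B$, and I would estimate the contributions of $T_{\theta}f_{1}$ and $T_{\theta}f_{2}$ separately, taking the supremum over $B$ at the very end.

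For the local part I would invoke the $L^{p}(\omega)$-boundedness of $T_{\theta}$ to obtain $\|T_{\theta}f_{1}\|_{L^{p}(\omega,B)}\le\|T_{\theta}f_{1}\|_{L^{p}(\omega)}\le C\|f\|_{L^{p}(\omega,2B)}$, and then read off $\|f\|_{L^{p}(\omega,2B)}\le[\phi(\omega(2B))]^{1/p}\|f\|_{\mathcal{L}^{p,\phi}(\omega)}$ straight from the definition of the norm, since $2B$ is itself an admissible ball. The surviving factor $[\phi(\omega(2B))/\phi(\omega(B))]^{1/p}$ is controlled by a constant using the doubling $\omega(2B)\le C_{4}\omega(B)$ from \textbf{Lemma 2.2} together with the regularity of $\phi$ encoded in (2.4) and (2.5); this yields the desired estimate for the $f_{1}$-term.

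For the global part I would use only the size bound (3.2). For $x\in B$ and $y\notin 2B$ one has $d(x,y)\gtrsim d(x_{0},y)$, so decomposing $X\setminus 2B=\bigcup_{k\ge 1}(2^{k+1}B\setminus 2^{k}B)$ and using $V(x,y)\gtrsim\mu(2^{k}B)$ on the $k$-th annulus (a consequence of the doubling condition (1.1)) gives the pointwise bound
\begin{eqnarray*}
|T_{\theta}f_{2}(x)|\le C\sum_{k=1}^{\infty}\frac{1}{\mu(2^{k+1}B)}\int_{2^{k+1}B}|f(y)|\,\mathrm{d}\mu(y).
\end{eqnarray*}
Applying H\"older's inequality together with the $A_{p}(\mu)$ duality estimate $\big(\int_{2^{k+1}B}\omega^{1-p'}\,\mathrm{d}\mu\big)^{1/p'}\le C\,\mu(2^{k+1}B)[\omega(2^{k+1}B)]^{-1/p}$ turns each average into $C[\phi(\omega(2^{k+1}B))/\omega(2^{k+1}B)]^{1/p}\|f\|_{\mathcal{L}^{p,\phi}(\omega)}$, after which \textbf{Lemma 2.4} sums the series to $C[\phi(\omega(B))/\omega(B)]^{1/p}\|f\|_{\mathcal{L}^{p,\phi}(\omega)}$. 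Since this bound is independent of $x\in B$, integrating its $p$-th power against $\omega$ over $B$ produces the factor $[\omega(B)]^{1/p}$, which cancels the $[\omega(B)]^{-1/p}$ and leaves $[\phi(\omega(B))]^{-1/p}\|T_{\theta}f_{2}\|_{L^{p}(\omega,B)}\le C\|f\|_{\mathcal{L}^{p,\phi}(\omega)}$. Combining this with the local estimate and taking the supremum over all balls $B$ completes the argument.

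I expect the main obstacle to be the global estimate: correctly passing from the kernel size bound to the annular averages using the RD-space geometry $\mu(2^{k+1}B)\sim\mu(2^{k}B)$ and $V(x,y)\gtrsim\mu(2^{k}B)$, and then invoking \textbf{Lemma 2.4} to sum the resulting series into the single factor $[\phi(\omega(B))/\omega(B)]^{1/p}$ that cancels against the output normalization. By contrast, the weight bookkeeping in the local term is routine once the comparability $\phi(\omega(2B))\le C\phi(\omega(B))$ is in hand.
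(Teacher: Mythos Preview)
Your proposal is correct and follows essentially the same route as the paper's own proof: the same $f=f\chi_{2B}+f\chi_{X\setminus 2B}$ decomposition, the weighted $L^{p}(\omega)$-boundedness of $T_{\theta}$ together with $\omega(2B)\le C_{4}\omega(B)$ and (2.4) for the local term, and the kernel size estimate (3.2), annular splitting, H\"older plus the $A_{p}$ condition, and \textbf{Lemma 2.4} for the global term. The only cosmetic difference is that the paper writes the $A_{p}$ step as $\big(\int_{2^{k+1}B}\omega^{1-p'}\,\mathrm{d}\mu\big)^{1/p'}\lesssim V_{2^{k+1}r}(x_{0})[\omega(2^{k+1}B)]^{-1/p}$ and does not invoke (2.5) in the local estimate, only (2.3) and (2.4).
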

\noindent
\begin{proof}[\textbf{Proof.}]
Let $p\in(1,\infty)$, we only need to consider that for any fixed ball $B=B(x_0,r)\subset X$,
\begin{eqnarray}
\bigg\{\frac{1}{\phi(\omega(B))}\int_{B}[T_{\theta}f(x)]^{p}\omega(x)\mathrm{d}\mu(x)\bigg\}^{\frac{1}{p}}\leq C\|f\|_{\mathcal{L}^{p,\phi}(\omega)}.
\end{eqnarray}
To estimate (3.6), we decompose $f$ as $f:=f_{1}+f_{2}$, where $f_{1}:=f\chi_{2B}$ and $2B=B(x_0,2r)$, write
\begin{eqnarray*}
&&\bigg\{\frac{1}{\phi(\omega(B))}\int_{B}[T_{\theta}(f)(x)]^{p}\omega(x)\mathrm{d}\mu(x)\bigg\}^{\frac{1}{p}}\\
&&\quad\quad \leq \bigg\{\frac{1}{\phi(\omega(B))}\int_{B}[T_{\theta}(f_{1})(x)]^{p}\omega(x)\mathrm{d}\mu(x)\bigg\}^{\frac{1}{p}}
+\bigg\{\frac{1}{\phi(\omega(B))}\int_{B}[T_{\theta}(f_2)(x)]^{p}\omega(x)\mathrm{d}\mu(x)\bigg\}^{\frac{1}{p}}\\
&&\quad\quad ={G}_{1}+{G}_{2}.
\end{eqnarray*}
The estimate for $G_1$ goes as follows. From [\cite{D}, Theorem 1.3] slightly modified, we know that the $T_{\theta}$ is bounded on $L^{p}(\omega)$ for $p\in(1,\infty)$.
By applying (2.3) and (2.4), implies that
\begin{eqnarray*}
G_1&\leq&\frac{1}{[\phi(\omega(B))]^{\frac{1}{p}}}\bigg[\int_{X}|f_{1}(x)|^{p}\omega(x)\mathrm{d}\mu(x)\bigg]^{\frac{1}{p}}\\
&\leq&C\bigg[\frac{1}{\phi(\omega(2B))}\int_{2B}|f(x)|^{p}\omega(x)\mathrm{d}\mu(x)\bigg]^{\frac{1}{p}}\bigg[\frac{\phi(\omega(2B))}{\phi(\omega(B))}\bigg]^{\frac{1}{p}}\\
&\leq&C\|f\|_{\mathcal{L}^{p,\phi}(\omega)}\bigg[\frac{\omega(2B)}{\omega(B)}\bigg]^{\frac{1}{p}}\\
&\leq&C\|f\|_{\mathcal{L}^{p,\phi}(\omega)}.
\end{eqnarray*}
For term $G_2$, notice that, for any $x\in B$ and $y\in(2B)^{c}$, we obtain $d(x,y)\sim d(x_0,y)$ and $V(x,y)\sim V(x_0,y)$, by virtue of H\"{o}lder inequality and {\bf Definition 1.1} and {\bf Lemma 2.4} ,
\begin{eqnarray*}
|T_{\theta}(f_2)(x)|&\leq&\int_{d(y,x_0)\geq 2r}|K(x,y)f(y)|\mathrm{d}\mu(y)\\
&\leq&C\int_{d(y,x_0)\geq 2r}\frac{|f(y)|}{V(x,y)}\mathrm{d}\mu(y)\\
&\sim&C\int_{d(y,x_0)\geq 2r}\frac{|f(y)|}{V(x_0,y)}\mathrm{d}\mu(y)\\
&\leq&C\sum^{\infty}_{k=1}\int_{2^{k}r\leq d(y,x_0)\leq2^{k+1}r}\frac{|f(y)|}{V(x_0,y)}\mathrm{d}\mu(y)\\
&\leq&C\sum^{\infty}_{k=1}\frac{1}{V_{2^{k}r}(x_0)}\bigg[\int_{B(x_0,2^{k+1}r)}|f(y)|^{p}\omega(y)\mathrm{d}\mu(y)\bigg]^{\frac{1}{p}}
\bigg[\int_{B(x_0,2^{k+1}r)}\omega(y)^{1-p^{\prime}}\mathrm{d}\mu(y)\bigg]^{\frac{1}{p^{\prime}}}\\
&\leq&C\sum^{\infty}_{k=1}\frac{[\phi(\omega(B(x_0,2^{k+1}r)))]^{\frac{1}{p}}}{V_{2^{k}r}(x_0)}
\cdot\frac{V_{2^{k+1}r}(x_0)}{[\omega(B(x_0,2^{k+1}r))]^{\frac{1}{p}}}\|f\|_{\mathcal{L}^{p,\phi}(\omega)}\\
&\leq&C\bigg[\frac{\phi(\omega(B))}{\omega(B)}\bigg]^{\frac{1}{p}}\|f\|_{\mathcal{L}^{p,\phi}(\omega)}.
\end{eqnarray*}
Thus
\begin{eqnarray*}
\bigg\{\frac{1}{\phi(\omega(B))}\int_{B}[T_{\theta}(f_2)(x)]^{p}\omega(x)\mathrm{d}\mu(x)\bigg\}^{\frac{1}{p}}
&\leq&C \bigg[\frac{\phi(\omega(B))}{\omega(B)}\bigg]^{\frac{1}{p}}\bigg[\frac{\omega(B)}{\phi(\omega(B))}\bigg]^{\frac{1}{p}}\|f\|_{\mathcal{L}^{p,\phi}(\omega)}\\
&\leq&C \|f\|_{\mathcal{L}^{p,\phi}(\omega)}.
\end{eqnarray*}
Which, together with estimate of $\mathrm{G}_{1}$, we obtain the desired result.
\end{proof}
\begin{lemma}\rm \cite{LLW}
Let $p\in(1,\infty)$, $\omega\in A_{p}(\mu)$ and $b\in \mathrm{BMO(\mu)}$. Let $\phi: (0,\infty)\rightarrow(0,\infty)$ be an increasing function, continuous function satisfying conditions (2.4) and (2.5) and $\theta$ be a non-negative, non-decreasing function on $(0,\infty)$ with satisfying (3.1). Then the commutator $[b,T_{\theta}]$ defined as in (3.5) is bounded on $\mathcal{L}^{p,\phi}(\omega)$.
\end{lemma}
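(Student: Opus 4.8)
The plan is to imitate the single-ball scheme used for Lemma 3.2. It suffices to prove that, for every fixed ball $B=B(x_{0},r)\subset X$,
\[
\bigg\{\frac{1}{\phi(\omega(B))}\int_{B}|[b,T_{\theta}]f(x)|^{p}\omega(x)\,\mathrm{d}\mu(x)\bigg\}^{\frac{1}{p}}\leq C\|b\|_{\ast}\|f\|_{\mathcal{L}^{p,\phi}(\omega)},
\]
since taking the supremum over $B$ then gives the asserted bound. As before I split $f=f_{1}+f_{2}$ with $f_{1}:=f\chi_{2B}$ and $f_{2}:=f\chi_{(2B)^{c}}$, so that $[b,T_{\theta}]f=[b,T_{\theta}]f_{1}+[b,T_{\theta}]f_{2}$. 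For the local piece I would invoke the weighted $L^{p}(\omega)$ boundedness of the commutator, $\|[b,T_{\theta}]g\|_{L^{p}(\omega)}\leq C\|b\|_{\ast}\|g\|_{L^{p}(\omega)}$ for $\omega\in A_{p}(\mu)$ (the commutator analogue of the $L^{p}(\omega)$ bound quoted from \cite{D} in Lemma 3.2), apply it to $g=f_{1}$, and then use the $\phi$-doubling coming from (2.4) together with Lemma 2.2 exactly as in the estimate of $G_{1}$; this bounds the $f_{1}$-contribution by $C\|b\|_{\ast}\|f\|_{\mathcal{L}^{p,\phi}(\omega)}$.

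The substance is in the global piece, which I would treat via the commutator identity
\[
[b,T_{\theta}]f_{2}(x)=(b(x)-b_{B})T_{\theta}f_{2}(x)-T_{\theta}\big((b-b_{B})f_{2}\big)(x),
\]
handling the two terms separately. For the first term I reuse the pointwise bound from Lemma 3.2, namely $|T_{\theta}f_{2}(x)|\leq C[\phi(\omega(B))/\omega(B)]^{1/p}\|f\|_{\mathcal{L}^{p,\phi}(\omega)}$ for $x\in B$; then the remaining factor is controlled by the weighted John--Nirenberg inequality $\int_{B}|b(x)-b_{B}|^{p}\omega(x)\,\mathrm{d}\mu(x)\leq C\|b\|_{\ast}^{p}\omega(B)$, valid since $A_{p}(\mu)\subset A_{\infty}(\mu)$, and multiplying out gives precisely $C\|b\|_{\ast}[\phi(\omega(B))]^{1/p}\|f\|_{\mathcal{L}^{p,\phi}(\omega)}$, as required after dividing by $[\phi(\omega(B))]^{1/p}$.

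For the second term I would use the kernel size bound (3.2) and the annular decomposition over $\{2^{k}r\leq d(y,x_{0})<2^{k+1}r\}$, just as in the $G_{2}$ estimate, but now splitting the oscillation as $|b(y)-b_{B}|\leq|b(y)-b_{2^{k+1}B}|+|b_{2^{k+1}B}-b_{B}|$ and using $|b_{2^{k+1}B}-b_{B}|\leq Ck\|b\|_{\ast}$. On each annulus I apply H\"older's inequality with the dual pair $(\omega,\omega^{1-p^{\prime}})$ and control the factor $|b(y)-b_{2^{k+1}B}|$ by the weighted John--Nirenberg inequality relative to $\omega^{1-p^{\prime}}\in A_{p^{\prime}}(\mu)$. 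This produces the series
\[
\sum_{k\geq1}(1+k)\bigg[\frac{\phi(\omega(2^{k}B))}{\omega(2^{k}B)}\bigg]^{\frac{1}{p}}\|f\|_{\mathcal{L}^{p,\phi}(\omega)},
\]
which is where the argument genuinely departs from Lemma 3.2.

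The main obstacle is therefore the convergence of this series: the extra linear factor $1+k$, absent in the $T_{\theta}$ case of Lemma 2.4, is generated by the BMO oscillation across successive annuli. I would dispose of it by establishing a commutator-strengthened version of Lemma 2.4, namely $\sum_{k\geq1}(1+k)[\phi(\omega(2^{k}B))/\omega(2^{k}B)]^{1/p}\leq C[\phi(\omega(B))/\omega(B)]^{1/p}$. The key observation is that (2.5), written as $F(r):=\int_{r}^{\infty}\frac{\phi(t)}{t}\frac{\mathrm{d}t}{t}\leq C\frac{\phi(r)}{r}$, forces $\tfrac{\mathrm{d}}{\mathrm{d}r}\log F(r)\leq-\tfrac{\beta}{r}$ for some $\beta>0$, so $F$ decays like a power of $r$; combined with the reverse-doubling estimate $\omega(2^{k}B)\geq C_{3}^{k}\omega(B)$ from (2.2), this makes $[\phi(\omega(2^{k}B))/\omega(2^{k}B)]^{1/p}$ decay geometrically in $k$, and a geometric sequence easily absorbs the polynomial weight $1+k$. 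Summing the two global contributions and combining with the local estimate yields the single-ball bound, completing the proof.
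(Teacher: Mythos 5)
The paper never actually proves this lemma---it is imported verbatim from \cite{LLW}---so there is no internal proof to compare against; judged on its own terms, your reconstruction is correct and follows the route one would expect, parallel to the paper's own proof of Lemma 3.1. The local part via the $L^{p}(\omega)$ bound for $[b,T_{\theta}]$ from \cite{D} together with (2.3)--(2.4), the splitting $[b,T_{\theta}]f_{2}=(b-b_{B})T_{\theta}f_{2}-T_{\theta}\big((b-b_{B})f_{2}\big)$, the weighted John--Nirenberg inequalities for $\omega\in A_{\infty}(\mu)$ and for $\omega^{1-p^{\prime}}\in A_{p^{\prime}}(\mu)$, and the telescoping bound $|b_{2^{k+1}B}-b_{B}|\leq Ck\|b\|_{\ast}$ are all available on RD-spaces, and you correctly isolate the one genuinely new ingredient: the $(1+k)$-weighted strengthening of Lemma 2.4. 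Your mechanism for that step checks out: with $F(r)=\int_{r}^{\infty}\phi(t)t^{-2}\,\mathrm{d}t$, condition (2.5) gives $F^{\prime}(r)=-\phi(r)/r^{2}\leq-F(r)/(Cr)$, hence $F(r)\leq F(s)(s/r)^{1/C}$ for $r\geq s$; since also $c\,\phi(2r)/(2r)\leq F(r)\leq C\,\phi(r)/r$ (using monotonicity of $\phi$, or the almost-decreasing property of $\phi(t)/t$---note that (2.4) as printed has the inequality in the direction of ``almost increasing,'' but you use the intended reading, as does the paper's own $G_{1}$ estimate), the quantity $\phi(t)/t$ decays like a fixed power along dyadic scales, and the reverse doubling (2.2), $\omega(2^{k}B)\geq C_{3}^{k}\omega(B)$, converts this into geometric decay of $[\phi(\omega(2^{k}B))/\omega(2^{k}B)]^{1/p}$ in $k$, which indeed absorbs the polynomial factor $1+k$.

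Two harmless caveats. First, reverse doubling can only hold while $2^{k}B\subsetneq X$ (the paper assumes $\mu(X)<\infty$), so for a fixed ball $B$ the dyadic sum is effectively finite for large $k$; this is the same convention already implicit in the paper's Lemma 2.2 and Lemma 2.4, so your argument is consistent with the paper's framework. Second, in the first global term you need the pointwise bound $|T_{\theta}f_{2}(x)|\leq C[\phi(\omega(B))/\omega(B)]^{1/p}\|f\|_{\mathcal{L}^{p,\phi}(\omega)}$ to hold uniformly for $x\in B$ before integrating it against $|b-b_{B}|^{p}\omega$; it does, since the $G_{2}$ computation in Lemma 3.1 is uniform in $x\in B$, so this is a matter of stating the order of quantifiers rather than a gap.
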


\begin{proof}[\textbf{Proof of Theorem} $\mathrm{3.1}$]
Let $\delta$ be a fixed constant satisfying $0<\varepsilon<\delta<p-1$. By applying {\bf Definition 1.1}, observe that
\begin{eqnarray*}
\|T_{\theta}(f)\|_{\mathcal{L}^{p),\phi}_{\varphi}(\omega)}&=& \mathop{\sup}\limits_{0<\varepsilon<p-1}\varphi(\varepsilon)\|T_{\theta}(f)\|_{\mathcal{L}^{p-\varepsilon,\phi}(\omega)}\\
&\leq&\mathop{\sup}\limits_{0<\varepsilon<\delta}\varphi(\varepsilon)\|T_{\theta}(f)\|_{\mathcal{L}^{p-\varepsilon,\phi}(\omega)}
+\mathop{\sup}\limits_{\delta<\varepsilon<p-1}\varphi(\varepsilon)\|T_{\theta}(f)\|_{\mathcal{L}^{p-\varepsilon,\phi}(\omega)}\\
&=&\mathrm{H}_{1}+\mathrm{H}_{2}.
\end{eqnarray*}
The estimates for $\mathrm{H}_{1}$ goes as follows. From {\bf Definition 1.1} and {\bf Lemma 3.1}, it follows that
\begin{eqnarray*}
\mathrm{H}_{1}&=&\mathop{\sup}\limits_{0<\varepsilon<\delta}\varphi(\varepsilon)\|T_{\theta}(f)\|_{\mathcal{L}^{p-\varepsilon,\phi}(\omega)}\\
&\leq& C\mathop{\sup}\limits_{0<\varepsilon<\delta}\varphi(\varepsilon)\|f\|_{\mathcal{L}^{p-\varepsilon,\phi}(\omega)}\\
&\leq& C\|f\|_{\mathcal{L}^{p),\phi}_{\varphi}(\omega)}.
\end{eqnarray*}
Fix $\varepsilon\in(\delta, p-1)$ so that $\frac{p-\delta}{p-\varepsilon}>1$. Using H\"{o}lder inequality with respect to the $\big(\frac{p-\delta}{p-\varepsilon}\big)^{\prime}=\frac{p-\delta}{\varepsilon-\delta}$ and the boundedness of $T_{\theta}$ in $L^{p}(\omega)$ for $p\in(1,\infty)$, we can deduce that
\begin{eqnarray*}
&&\mathop{\sup}\limits_{\delta<\varepsilon<p-1}\varphi(\varepsilon)\|T_{\theta}(f)\|_{\mathcal{L}^{p-\varepsilon,\phi}(\omega)}\\
&&\quad\quad=\mathop{\sup}\limits_{\delta<\varepsilon<p-1}\varphi(\varepsilon)\mathop{\sup}\limits_{B\subset X}
[\phi(\omega(B))]^{-\frac{1}{p-\varepsilon}}\bigg(\int_{B}|T_{\theta}(f)(x)|^{p-\varepsilon}\omega(x)\mathrm{d}\mu(x)\bigg)^{\frac{1}{p-\varepsilon}}\\
&&\quad\quad\leq \mathop{\sup}\limits_{\delta<\varepsilon<p-1}\varphi(\varepsilon)\mathop{\sup}\limits_{B\subset X}[\phi(\omega(B))]^{-\frac{1}{p-\varepsilon}}
\bigg(\int_{B}|T_{\theta}(f)(x)|^{p-\delta}\omega(x)\mathrm{d}\mu(x)\bigg)^{\frac{1}{p-\delta}}\omega(B)^{\frac{\varepsilon-\delta}{(p-\delta)(p-\varepsilon)}}\\
&&\quad\quad\leq C\mathop{\sup}\limits_{\delta<\varepsilon<p-1}\varphi(\varepsilon)\mathop{\sup}\limits_{B\subset X}[\phi(\omega(B))]^{-\frac{1}{p-\varepsilon}}
\bigg(\int_{B}|f(x)|^{p-\delta}\omega(x)\mathrm{d}\mu(x)\bigg)^{\frac{1}{p-\delta}}\omega(B)^{\frac{\varepsilon-\delta}{(p-\delta)(p-\varepsilon)}}\\
&&\quad\quad\leq C\mathop{\sup}\limits_{\delta<\varepsilon<p-1}\varphi(\varepsilon)\mathop{\sup}\limits_{B\subset X}[\phi(\omega(B))]^{-\frac{1}{p-\varepsilon}}[\phi(\omega(B))]^{\frac{1}{p-\delta}}\omega(B)^{\frac{\varepsilon-\delta}{(p-\delta)(p-\varepsilon)}}\\
&&\quad\quad\quad\times[\phi(\omega(B))]^{-\frac{1}{p-\delta}}\bigg(\int_{B}|f(x)|^{p-\delta}\omega(x)\mathrm{d}\mu(x)\bigg)^{\frac{1}{p-\delta}}.\\
\end{eqnarray*}
Let $$S=[\phi(\omega(B))]^{-\frac{1}{p-\varepsilon}}[\phi(\omega(B))]^{\frac{1}{p-\delta}}\omega(B)^{\frac{\varepsilon-\delta}{(p-\delta)(p-\varepsilon)}}.$$
Since $\delta<p-1$ and $\varepsilon\in(\delta,p-1)$, imply that $$0<\frac{\varepsilon-\delta}{(p-\delta)(p-\varepsilon)}<\frac{p-1-\delta}{p-\delta}.$$
By applying the monotonicity of $\phi$, we can deduce that
\begin{eqnarray*}
S&\leq &[\phi(\omega(B))]^{-\frac{1}{p-\delta}}[\phi(\omega(B))]^{\frac{1}{p-\delta}}\omega(B)^{\frac{\varepsilon-\delta}{(p-\delta)(p-\varepsilon)}}\\
&\leq& \omega(B)^{\frac{p-1-\delta}{(p-\delta)(p-\delta)}}\leq C.
\end{eqnarray*}

Combing above the estimate, we further obtain that
\begin{eqnarray*}
&&\mathop{\sup}\limits_{\delta<\varepsilon<p-1}\varphi(\varepsilon)\|T_{\theta}(f)\|_{\mathcal{L}^{p-\varepsilon,\phi}(\omega)}\\
&&\quad\quad\leq C\mathop{\sup}\limits_{\delta<\varepsilon<p-1}\varphi(\varepsilon)\mathop{\sup}\limits_{B\subset X}[\phi(\omega(B))]^{-\frac{1}{p-\delta}}\bigg(\int_{B}|f(x)|^{p-\delta}\omega(x)\mathrm{d}\mu(x)\bigg)^{\frac{1}{p-\delta}}\\
&&\quad\quad\leq C\mathop{\sup}\limits_{\delta<\varepsilon<p-1}\varphi(\varepsilon)[\phi(\delta)]^{-1}\phi(\delta)\mathop{\sup}\limits_{B\subset X}[\phi(\omega(B))]^{-\frac{1}{p-\delta}}\bigg(\int_{B}|f(x)|^{p-\delta}\omega(x)\mathrm{d}\mu(x)\bigg)^{\frac{1}{p-\delta}}\\
&&\quad\quad\leq C\varphi(p-1)[\phi(\delta)]^{-1}\|f\|_{\mathcal{L}^{p),\phi}_{\varphi}(\omega)}\\
&&\quad\quad \leq C\|f\|_{\mathcal{L}^{p),\phi}_{\varphi}(\omega)}.
\end{eqnarray*}
Which, together with estimate of $\mathrm{H}_{1}$, we obtain the desired result.
\end{proof}
\begin{proof}[\textbf{Proof of Theorem} $\mathrm{3.2}$]
First observe that the boundedness of  $[b,T_{\theta}]$ on $L^{p}(\omega)$ for $\omega\in A_{p}(\mu)$ (see \cite{D}), and the Calder\'on-Zygmund interpolation theorem imply that there is a number $\delta$, $\delta\in(0,p-1)$, such that
$$\|[b,T_{\theta}](f)\|_{\mathcal{L}^{p-\varepsilon,\phi}(\omega)}\leq C\|b\|_{BMO(\mu)}\|f\|_{\mathcal{L}^{p-\varepsilon,\phi}(\omega)}, \quad \varepsilon\in (0,\delta].$$
Fix $\varepsilon\in(\delta, p-1)$ so that $\frac{p-\delta}{p-\varepsilon}>1$, by virtue of H\"{o}lder's inequality and {\bf Lemma 3.2}, observe that
\begin{eqnarray*}
&&\|[b,T_{\theta}](f)\|_{\mathcal{L}^{p),\phi}_{\varphi}(\omega)}=\max\bigg\{
\mathop{\sup}\limits_{0<\varepsilon<\delta}\varphi(\varepsilon)\|[b,T_{\theta}](f)\|_{\mathcal{L}^{p-\varepsilon,\phi}(\omega)},\mathop{\sup}\limits_{\delta<\varepsilon<p-1}\varphi(\varepsilon)\|[b,T_{\theta}](f)\|_{\mathcal{L}^{p-\varepsilon,\phi}(\omega)}\bigg\}\\
&&\leq \max\bigg\{\mathop{\sup}\limits_{0<\varepsilon<\delta}\varphi(\varepsilon)\|[b,T_{\theta}](f)\|_{\mathcal{L}^{p-\varepsilon,\phi}(\omega)},\\
&&\quad\quad\quad\quad\mathop{\sup}\limits_{\delta<\varepsilon<p-1}\varphi(\varepsilon)\mathop{\sup}\limits_{B\subset X}[\phi(\omega(B))]^{-\frac{1}{p-\varepsilon}}
\|[b,T_{\theta}](f)\|_{L^{p-\delta}(\omega)}\omega(B)^{\frac{\varepsilon-\delta}{(p-\delta)(p-\varepsilon)}}\bigg\}\\
&&\leq \max\bigg\{\mathop{\sup}\limits_{0<\varepsilon<\delta}\varphi(\varepsilon)\|[b,T_{\theta}](f)\|_{\mathcal{L}^{p-\varepsilon,\phi}(\omega)},\\
&&\quad\quad\quad\quad\mathop{\sup}\limits_{\delta<\varepsilon<p-1}\varphi(\varepsilon)\mathop{\sup}\limits_{B\subset X}
[\phi(\omega(B))]^{-\frac{1}{p-\delta}}\|[b,T_{\theta}](f)\|_{L^{p-\delta}(\omega)}\bigg[\frac{\omega(B)}{\phi(\omega(B))}\bigg]^{\frac{\varepsilon-\delta}{(p-\delta)(p-\varepsilon)}}\bigg\}\\
&&\leq \max\bigg\{\mathop{\sup}\limits_{0<\varepsilon<\delta}\varphi(\varepsilon)\|[b,T_{\theta}](f)\|_{\mathcal{L}^{p-\varepsilon,\phi}(\omega)},\\
&&\quad\quad\quad\quad\mathop{\sup}\limits_{\delta<\varepsilon<p-1}\bigg[\frac{\omega(B)}{\phi(\omega(B))}\bigg]^{\frac{\varepsilon-\delta}{(p-\delta)(p-\varepsilon)}}
\mathop{\sup}\limits_{0<\varepsilon<\delta}\varphi(\varepsilon)\|[b,T_{\theta}](f)\|_{\mathcal{L}^{p-\varepsilon,\phi}(\omega)}\bigg\}.
\end{eqnarray*}

Let $S:=\mathop{\sup}\limits_{0<\varepsilon<\delta}\varphi(\varepsilon)\|[b,T_{\theta}](f)\|_{\mathcal{L}^{p-\varepsilon,\phi}(\omega)}$ and
$T:=\mathop{\sup}\limits_{\delta<\varepsilon<p-1}\bigg[\frac{\omega(B)}{\phi(\omega(B))}\bigg]^{\frac{\varepsilon-\delta}{(p-\delta)(p-\varepsilon)}}$.

Then
\begin{eqnarray*}
\|[b,T_{\theta}](f)\|_{\mathcal{L}^{p),\phi}_{\varphi}(\omega)}\leq \max\{1,T\}\cdot S\leq
C\|b\|_{BMO(\mu)}\|f\|_{\mathcal{L}^{p-\varepsilon,\phi}_{\varphi}(\omega)}.
\end{eqnarray*}

Thus, we obtain the desired result.
\end{proof}

\color{black}
\vskip 0.5cm

\end{document}